\newtheorem{theorem}{Theorem}[section]
\newtheorem{lemma}[theorem]{Lemma}
\newtheorem{proposition}[theorem]{Proposition}
\newtheorem{corollary}[theorem]{Corollary}
\theoremstyle{definition}
\newtheorem{definition}[theorem]{Definition}
\newtheorem{example}[theorem]{Example}
\theoremstyle{remark}
\numberwithin{equation}{section}
\begin{document}

\setcounter{page}{1}

\title[Short Title]{approximate semi-amenability of Banach algebras}

\author[M. Shams Kojanaghi]{M. Shams Kojanaghi}
\address[M. Shams Kojanaghi]{Department of Mathematics Science and Research Branch, Islamic Azad University
 Tehran of Iran}
\email{{mstafa.shams99@yahoo.com}}

\author[K. Haghnejad Azar]{K. Haghnejad Azar $^*$}
\address[K. Haghnejad Azar]{Department of Mathematics, University of Mohaghegh Ardabili, Ardabil of Iran.}
\email{haghnejad@uma.ac.ir}

\author[M. R. Mardanbeigi]{M. R. Mardanbeigi}
\address[M. R. Mardanbeigi]{Department of Mathematics Science and Research Branch, Islamic Azad University
 Tehran of Iran}
\email{mmardanbeigi@yahoo.com}

\let\thefootnote\relax\footnote{Copyright 2016 by the Tusi Mathematical Research Group.}

\subjclass[2010]{Primary 39B82; Secondary 44B20, 46C05.}

\keywords{approximate semi-amenability, approximate Connes semi-amenability.}

\date{Received: xxxxxx; Revised: yyyyyy; Accepted: zzzzzz.
\newline \indent $^{*}$Corresponding author}
\begin{abstract}
 Let $\mathfrak{A}$ be a Banach algebra, and $\mathcal{X}$ a Banach $\mathfrak{A}$-bimodule. A bounded
  linear mapping $\mathcal{D}:\mathfrak{A}\rightarrow \mathcal{X}$ is approximately semi-inner derivation if
   there eixist nets $(\xi_{\alpha})_{\alpha}$ and $(\mu_{\alpha})_{\alpha}$ in $\mathcal{X}$ such that, for each
    $a\in\mathfrak{A}$, $\mathcal{D}(a)=\lim_{\alpha}(a.\xi_{\alpha}-\mu_{\alpha}.a)$. $\mathfrak{A}$ is called
     approximately semi-amenable if for every Banach $\mathfrak{A}$-bimodule $\mathcal{X}$, every
      $\mathcal{D}\in\mathcal{Z}^{1}(\mathfrak{A},\mathcal{X}^{*})$ is approximtely semi-inner. There are
       some Banach algebras which are approximately semi-amenable, but not approximately amenable.
 In this manuscript, we investigate some properties of approximate semi-amenability of Banach algebras. Also in
  Theorem \ref{ee} we prove the approximate semi-amenability of Segal algebras on a locally compact group $G$.  
\end{abstract}
 \maketitle

\textbf{\section{\bf \textbf{Introduction and preliminaries}}}

Let $\mathfrak{A}$ be a Banach algebra, and $\mathcal{X}$ a Banach $\mathfrak{A}$-bimodule. A derivation is a
 linear map $\mathcal{D}:\mathfrak{A}\rightarrow \mathcal{X}$, such that $\mathcal{D}(ab)=\mathcal{D
 }(a)b+a\mathcal{D}(b)$, $(a\in\mathfrak{A})$.
Throughout, unless otherwise stated, by a derivation we mean a continuous derivation. 
 The space of continuous derivations from $\mathfrak{A}$ to $\mathcal{X}$ is denoted by $\mathcal{Z}^{1
 }(\mathfrak{A},\mathcal{X})$.
 For $x\in\mathcal{X}$, set $\delta_{x}(a):=a.x-x.a$, which is a continuous derivation from $\mathfrak{A}$ to
  $\mathcal{X}$, and is called inner derivation.
 The space of all inner derivations from $\mathfrak{A}$ to $\mathcal{X}$ is denoted by $\mathcal{N}^{1
 }(\mathfrak{A},\mathcal{X})$, the first cohomology group of $\mathfrak{A}$ with coefficients
in $\mathcal{X}$ is defined to be the quotient space $\mathcal{H}^{1}(\mathfrak{A},\mathcal{X}):=
 \mathcal{Z}^{1}(\mathfrak{A},\mathcal{X})/\mathcal{N}^{1}(\mathfrak{A},\mathcal{X})$.

The Banach algebra $\mathfrak{A}$ is amenable if and only if $\mathcal{H}^{1}(\mathfrak{A},
 \mathcal{X}^{*}):= \left\lbrace 0\right\rbrace$ for each Banach $\mathfrak{A}$-bimodule $\mathcal{X}$, and
  that $\mathfrak{A}$ is contractible if and only if $\mathcal{H}^{1}(\mathfrak{A},\mathcal{X}):=\left\lbrace
   0\right\rbrace $ for each Banach $\mathfrak{A}$-bimodule $\mathcal{X}$.

Gourdeau in \cite{5}, showed that a Banach algebra $\mathfrak{A}$ is amenable if and only if every bounded
 derivation $\mathcal{D}:\mathfrak{A}\rightarrow \mathcal{X}$, for any Banach $\mathfrak{A}$-bimodule
  $\mathcal{X}$ can be approximated by a net of inner derivations [\cite{5}, Proposition 1]. A weaker version of
   this notion is approximate amenability of Banach algebras, that is, a Banach algebras $\mathfrak{A}$ is
    approximately amenable if and only if every bounded derivation $\mathcal{D}:\mathfrak{A}\rightarrow
     \mathcal{X}^{*}$ for any Banach dual $\mathfrak{A}$-bimodule $\mathcal{X}^{*}$ can be approximated by a
      net of inner derivations.
The concept of \emph{approximate amenability} of Banach algebras first was introduced and studied by
Ghahramani and Loy in \cite{1}. Then they showed that approximate amenability and approximate contractibility of
 Banach algebras are equivalent [\cite{4}, Theorem 2.1]. They also introduced the concept of 
 \textit{semi-amenability} and \emph{approximate semi-amenability} of Banach algebras in \cite{2}, and studied
  the last notion on tensor product of Banach algebras.\\ 
Let $\mathfrak{A}$ be a Banach algebra and $\mathcal{X}$ be a Banach $\mathfrak{A}$-bimodule. A derivation
$\mathcal{D}:\mathfrak{A}\rightarrow \mathcal{X}$ is called \textit{approximately inner} if there exists a net
 $(\xi_{\alpha})$ in $\mathcal{X}$ such that $\mathcal{D}(a)=\lim_{\alpha}(a.\xi_{\alpha}-\xi_{\alpha}.a)$, for
  each $a\in\mathfrak{A}$.
 $\mathcal{D}$ is \textit{approximately semi-inner} if there are nets $(\zeta_{\alpha})_{\alpha}$ and
  $(\mu_{\alpha})_{\alpha}$ in $\mathcal{X}$ such that $\mathcal{D}(a)=\lim_{\alpha}(a
  .\zeta_{\alpha}-\mu_{\alpha}.a)$, for each $a\in\mathfrak{A}$.
 
A Banach algebra $\mathfrak{A}$ is \textit{approximately amenable} (rep: \textit{approximately semi-amenable}) if  
 for every Banach $\mathfrak{A}$-bimodule $\mathcal{X}$, every derivation $\mathcal{D}
 :\mathfrak{A}\rightarrow \mathcal{X}^{*}$ is approximately inner (rsp: approximately semi-inner).\\
For a Banach algebra $\mathfrak{A}$ the projective tensor product $\mathfrak{A}\widehat{\otimes} \mathfrak{A}$
 is the completion of algebraic tensor product $\mathfrak{A}\otimes \mathfrak{A}$ with respect to projective tensor
  norm, and is a Banach $\mathfrak{A}$-bimodule by the multiplication specified by
\begin{align*}
&a.(b\otimes c):=ab\otimes c,\\
&(b\otimes c).a:=b\otimes ca, \;\;\;\; (a ,b , c\in \mathfrak{A}).
\end{align*}
The dual space $(\mathfrak{A}\widehat{\otimes}\mathfrak{A})^{*}$ is $BL(\mathfrak{A}\times\mathfrak{A})$,
 the space of bounded bilinear forms on $\mathfrak{A}\times \mathfrak{A}$. The product map and it's first and
  second duals, respectively, specify by
\begin{align*}
&\pi:\mathfrak{A}\widehat{\otimes}\mathfrak{A}\rightarrow \mathfrak{A},\;\;\pi(a\otimes b)=ab,\\
&\pi^{*}:\mathfrak{A}^{*}\rightarrow (\mathfrak{A}\widehat{\otimes}\mathfrak{A})^{*},\;\;\langle\pi^{*
}(f),a\otimes b\rangle=\langle f,ab\rangle,\\
&\pi^{**}:(\mathfrak{A}\widehat{\otimes}\mathfrak{A})^{**}\rightarrow\mathfrak{A}^{**},\;\;\langle \pi^{**
}(F''),f\rangle=\langle F'',\pi^{*}(f)\rangle,
\end{align*}
for $ a, b \in \mathfrak{A}$, $f\in \mathfrak{A}^{*}$ and
 $F''\in(\mathfrak{A}\widehat{\otimes}\mathfrak{A})^{**}$.
Clearly, $\pi$ is an $\mathfrak{A}$-bimodule homomorphism with respect to the above module structure on
 $\mathfrak{A}\widehat{\otimes}\mathfrak{A}$.\\
Also for Banach algebras $\mathfrak{A}$ and $\mathcal{B}$, the $l_{1}$-direct sum $\mathfrak{A}\oplus
 \mathcal{B}$ equipped with the multiplication $(a,b).(c,d):=(ac,bd)$ is a Banach algebra.\\
 A Banach algebra $\mathfrak{A}$ is said to be \textit{dual} if there is a closed submodule $\mathfrak{A}_{*}$ of
 $\mathfrak{A}^{*}$, such that $\mathfrak{A}=(\mathfrak{A}_{*})^{*}$. For a dual Banach algebra
  $\mathfrak{A}$, a dual Banach $\mathfrak{A}$-bimodule 
 $\mathcal{X}$ is called \textit{normal} if, for each $x\in \mathcal{X}$ the maps $a\rightarrow a.x$ and
   $a\rightarrow x.a$  from $\mathfrak{A}$ into $\mathcal{X}$, are $w^{*}$-continuous.
 A dual Banach algebra $\mathfrak{A}$ is \textit{Connes-amenable} if, for every normal, dual
 Banach $\mathfrak{A}$-bimodule $\mathcal{X}$, every $w^{*}$-continuous derivation $\mathcal{D}:
\mathfrak{A}\rightarrow\mathcal{X}$ is inner. We denote $\mathcal{Z}^{1}_{w^{*}}(\mathfrak{A}
  ,\mathcal{X})$ for the $w^{*}$-continuous derivations from $\mathfrak{A}$ into $\mathcal{X}$ and
   $\mathcal{H}^{1}_{w^{*}}(\mathfrak{A},\mathcal{X})=\mathcal{Z}^{1}_{w^{*}}(\mathfrak{A}
  ,\mathcal{X})/\mathcal{N}^{1}(\mathfrak{A},\mathcal{X})$.\\
The unitization of non-unital Banach algebra $\mathfrak{A}$, is denoted by
 $\mathfrak{A}^{\sharp}=\mathfrak{A}\oplus\mathbb{C}$. 
We will sometimes abbreviate the phrases ''bounded approximate identity'' and ''approximate
  identity'' to B.A.I and A.I, respectively.
  
\textbf{\section{\bf Main results}}
A Banach algebra $\mathfrak{A}$ is approximately amenable (rep: approximately semi-amenable) if for every
 Banach $\mathfrak{A}$-bimodule $\mathcal{X}$, every derivation $\mathcal{D}:\mathfrak{A}\rightarrow
  \mathcal{X}^{*}$ is approximately inner (rsp: approximately semi-inner).
For $(1\leq p<\infty)$, the  Banach sequence algebras $l^{p}=l^{p}(\mathbb{N})$, are neither amenable nor
 approximately amenable [\cite{7}, Theorem 4.1]. But Example 3.7 of \cite{2} shows that $l^{p}$ is
  approximately semi-amenable whenever $(1\leq p<\infty)$.\\
Consequently, a Banach algebra can be approximately semi-amenable, without being approximately amenable, and this motivates us to
 decide that all chosen Banach algebras throughout this paper not to be approximately amenable, unless otherwise specified. 
    \begin{proposition} $\label{s.m}$
    Let $\mathfrak{A}$ be a Banach algebra. Then $\mathfrak{A}$ is approximately semi-amenable if
     and only if $\mathfrak{A}^{\sharp}$ is approximately semi-amenable.
  \end{proposition}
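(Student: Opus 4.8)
The plan is to prove both implications directly from the definition, using the identity $e$ of $\mathfrak{A}^{\sharp}=\mathfrak{A}\oplus\mathbb{C}$ and the embedding $a\mapsto(a,0)$ of $\mathfrak{A}$ as a closed ideal of $\mathfrak{A}^{\sharp}$.

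For the implication $\mathfrak{A}^{\sharp}$ approximately semi-amenable $\Rightarrow\mathfrak{A}$ approximately semi-amenable, I would take an arbitrary Banach $\mathfrak{A}$-bimodule $\mathcal{X}$ and a derivation $\mathcal{D}\in\mathcal{Z}^{1}(\mathfrak{A},\mathcal{X}^{*})$, promote $\mathcal{X}$ to a Banach $\mathfrak{A}^{\sharp}$-bimodule by letting $e$ act as the identity operator on either side, and extend $\mathcal{D}$ to $\widetilde{\mathcal{D}}:\mathfrak{A}^{\sharp}\to\mathcal{X}^{*}$ by $\widetilde{\mathcal{D}}(a+\lambda e)=\mathcal{D}(a)$, i.e.\ $\widetilde{\mathcal{D}}(e)=0$. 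A one-line check confirms $\widetilde{\mathcal{D}}$ is again a derivation. Approximate semi-amenability of $\mathfrak{A}^{\sharp}$ then yields nets $(\zeta_{\alpha}),(\mu_{\alpha})$ in $\mathcal{X}^{*}$ with $\widetilde{\mathcal{D}}(x)=\lim_{\alpha}(x.\zeta_{\alpha}-\mu_{\alpha}.x)$ for every $x\in\mathfrak{A}^{\sharp}$; restricting $x$ to $\mathfrak{A}$ gives $\mathcal{D}(a)=\lim_{\alpha}(a.\zeta_{\alpha}-\mu_{\alpha}.a)$, so $\mathcal{D}$ is approximately semi-inner. This direction is routine.

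The substantive implication is $\mathfrak{A}$ approximately semi-amenable $\Rightarrow\mathfrak{A}^{\sharp}$ approximately semi-amenable. Given a Banach $\mathfrak{A}^{\sharp}$-bimodule $\mathcal{X}$ and $\mathcal{D}\in\mathcal{Z}^{1}(\mathfrak{A}^{\sharp},\mathcal{X}^{*})$, the plan is to exploit that $e$ acts as a two-sided idempotent: the commuting bounded idempotents $x\mapsto e.x$ and $x\mapsto x.e$ split $\mathcal{X}$ into four closed sub-bimodules, and dually split $\mathcal{X}^{*}$, so that $\mathcal{D}$ decomposes as a finite sum of derivations into the corresponding dual corners. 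On the three corners where $e$ fails to act as a two-sided identity, $\mathfrak{A}^{\sharp}$ acts on only one side (or trivially), so the matching component of $\mathcal{D}$ is implemented by a single fixed element and is semi-inner (indeed inner with a constant net); combining these nets coordinatewise reduces the problem to the remaining corner, on which $\mathcal{X}$ is unital. There $\mathcal{D}(e)=0$, and $\mathcal{D}|_{\mathfrak{A}}$ is a derivation into the dual $\mathfrak{A}$-bimodule, so approximate semi-amenability of $\mathfrak{A}$ supplies nets $(\zeta_{\alpha}),(\mu_{\alpha})$ realizing $\mathcal{D}|_{\mathfrak{A}}$ on $\mathfrak{A}$.

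The hard part will be upgrading these nets from $\mathfrak{A}$ to all of $\mathfrak{A}^{\sharp}$. Since the surviving corner is unital, testing the candidate formula at $x=e$ gives $e.\zeta_{\alpha}-\mu_{\alpha}.e=\zeta_{\alpha}-\mu_{\alpha}$, so I must arrange $\lim_{\alpha}(\zeta_{\alpha}-\mu_{\alpha})=\mathcal{D}(e)=0$ while preserving $\lim_{\alpha}(a.\zeta_{\alpha}-\mu_{\alpha}.a)=\mathcal{D}(a)$ for $a\in\mathfrak{A}$. This is the delicate step, and it is exactly where I would concentrate the work: forcing the two nets to agree in the limit pushes the semi-inner datum toward a genuinely inner one, so the representation must be adjusted by a net that is asymptotically $\mathfrak{A}$-central — leaving the values on $\mathfrak{A}$ untouched — yet cancels $\zeta_{\alpha}-\mu_{\alpha}$ at $e$. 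Producing such an adjusting net (equivalently, certifying that the semi-inner datum for $\mathcal{D}|_{\mathfrak{A}}$ can be chosen so that $\zeta_{\alpha}-\mu_{\alpha}$ vanishes in the limit at $e$) is the crux on which the whole argument turns, and I expect it to be the main obstacle.
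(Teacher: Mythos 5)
Your first direction ($\mathfrak{A}^{\sharp}$ approximately semi-amenable $\Rightarrow$ $\mathfrak{A}$ approximately semi-amenable) is correct and is exactly the paper's argument. The problem is the substantive direction, and the step you isolate as the crux is not merely the main obstacle: it cannot be carried out, because that implication is false as stated. Indeed, suppose you had nets $(\zeta_{\alpha})$, $(\mu_{\alpha})$ in $\mathcal{X}^{*}$ with $x.\zeta_{\alpha}-\mu_{\alpha}.x\rightarrow\mathcal{D}(x)$ for \emph{every} $x\in\mathfrak{A}^{\sharp}$, where $\mathcal{D}(e)=0$. Put $p_{\alpha}=e.\zeta_{\alpha}$ and $q_{\alpha}=\mu_{\alpha}.e$. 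Testing at $x=e$ gives $p_{\alpha}-q_{\alpha}\rightarrow 0$ in norm, and since $x=xe=ex$ the module axioms give $x.\zeta_{\alpha}-\mu_{\alpha}.x=x.p_{\alpha}-q_{\alpha}.x=(x.p_{\alpha}-p_{\alpha}.x)+(p_{\alpha}-q_{\alpha}).x$ for every $x$. Boundedness of the module action forces $(p_{\alpha}-q_{\alpha}).x\rightarrow 0$, whence $\mathcal{D}(x)=\lim_{\alpha}(x.p_{\alpha}-p_{\alpha}.x)$ with the \emph{single} net $(p_{\alpha})$: the derivation is approximately inner. So on a unital Banach algebra, every approximately semi-inner derivation annihilating the identity is automatically approximately inner; combined with the decomposition $\mathcal{D}=\mathcal{D}_{1}+\delta_{\lambda}$ of [\cite{1}, Lemma 2.3] (which is what your four-corner splitting re-derives), this shows that a unital Banach algebra is approximately semi-amenable if and only if it is approximately amenable.

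This kills the statement, not just the proof strategy. If the forward direction of Proposition \ref{s.m} held, then for $\mathfrak{A}=l^{p}$ we would get: $l^{p}$ approximately semi-amenable $\Rightarrow$ $(l^{p})^{\sharp}$ approximately semi-amenable $\Rightarrow$ $(l^{p})^{\sharp}$ approximately amenable (by the above) $\Rightarrow$ $l^{p}$ approximately amenable (restrict the extended derivation, as in \cite{1}), contradicting the two facts the paper itself quotes: $l^{p}$ is approximately semi-amenable [\cite{2}, Example 3.7] but not approximately amenable [\cite{7}, Theorem 4.1]. So the asymptotically $\mathfrak{A}$-central adjusting net you hope to produce cannot exist in general: arranging $\zeta_{\alpha}-\mu_{\alpha}\rightarrow 0$ is, by the computation above, exactly the same as making $\mathcal{D}\mid_{\mathfrak{A}}$ approximately inner, i.e. it demands approximate amenability rather than approximate semi-amenability. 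You should also know that the paper's own proof commits precisely the error you were wary of: it asserts ``since $\mathcal{D}_{1}(e)=0$ and $\mathcal{D}_{1}\mid_{\mathfrak{A}}$ is approximately semi-inner, so is $\mathcal{D}$,'' tacitly reusing the same nets on all of $\mathfrak{A}^{\sharp}$; that inference is valid for approximate amenability, where there is one net $(\xi_{\alpha})$ in $e.\mathcal{X}^{*}.e$ and the $e$-term $\lambda(\xi_{\alpha}-\xi_{\alpha})$ vanishes identically, but for two independent nets the $e$-term is $\lambda(\zeta_{\alpha}-\mu_{\alpha})$, which need not tend to $0$. Your diagnosis of where the difficulty sits was exactly right; the honest conclusion is that the gap is unfillable and the proposition, in this direction, is inconsistent with the $l^{p}$ example the paper is built on.
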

  \begin{proof} 
 Let $\mathfrak{A}$ be approximately semi-amenable. Suppose that $\mathcal{X}$ is a Banach
  $\mathfrak{A}^{\sharp}$-bimodule and $\mathcal{D}:\mathfrak{A}^{\sharp}\rightarrow \mathcal{X}^{*}$ a
   continuous derivation. Then by [\cite{1}, Lemma 2.3], $\mathcal{D}= \mathcal{D}_{1}+\delta_{\lambda}$,
    where $\lambda\in \mathcal{X}^{*}$ and 
  $\mathcal{D}_{1}:\mathfrak{A}^{\sharp}\rightarrow e.\mathcal{X}^{*}.e$ is a derivation. Since
   $\mathcal{D}_{1}(e)=0$ and $\mathcal{D}_{1}\mid_{\mathfrak{A}}$ is approximately semi-inner, so is
    $\mathcal{D}$.\\
 Conversely, let   $\mathfrak{A}^{\sharp}$ be approximately semi-amenable. Suppose that $\mathcal{X}$ is a Banach
  $\mathfrak{A}$-bimodule and $\mathcal{D}:\mathfrak{A}\rightarrow \mathcal{X}^{*}$ a continuous derivation.
So $\mathcal{X}$ is a Banach $\mathfrak{A}^{\sharp}$-bimodule by module actions,
 
$ (a+\lambda e).x:=ax,\;\;x.(a+\lambda e):=xa, \;\; ((a+\lambda e)\in\mathfrak{A}^{\sharp})$.

  Now define 
  \begin{align*}
 & \mathcal{D}^{\thicksim}:\mathfrak{A}^{\sharp}\rightarrow\mathcal{X}^{*}\\
  & \mathcal{D}^{\thicksim}(a+\lambda e)=\mathcal{D}(a),\;\;(a\in\mathfrak{A}, \lambda\in\mathbb{C}).
  \end{align*}
 Obviously, $\mathcal{D}^{\thicksim}$ is a continuous derivation. Thus by assumption there are nets
  $(\xi_{\alpha})$ and $(\mu_{\alpha})$ in $\mathcal{X}^{*}$, such that
  \begin{align*}
  \mathcal{D}(a)=\mathcal{D}^{\thicksim}(a+\lambda e)&=\lim_{\alpha}((a+\lambda e).
  \xi_{\alpha}-\mu_{\alpha}.(a+\lambda e))\\
  &=\lim_{\alpha}(a\xi_{\alpha}-\mu_{\alpha}a).
  \end{align*}
 Thus $\mathfrak{A}$ is approximately semi-amenable.
  \end{proof}

 \begin{lemma}
Let the Banach algebra $\mathfrak{A}$ be approximately semi-amenable. Then $\mathfrak{A}$ has an approximate identity.
\end{lemma}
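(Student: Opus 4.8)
The plan is to produce a left and a right approximate identity separately, each via an ``identity derivation'' into a one-sidedly trivial dual module, and then to assemble them into a two-sided approximate identity. The mechanism I want to exploit is that approximate semi-innerness supplies \emph{two} nets $(\xi_\alpha)$ and $(\mu_\alpha)$, so by killing one of the two module actions I can force one of these nets to disappear, leaving exactly a one-sided approximate-identity relation.

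Concretely, for the right approximate identity I would give $\mathcal{X} = \mathfrak{A}^*$ the bimodule structure in which the left action is trivial, $a\cdot f = 0$, and the right action is the natural one, $\langle f\cdot a, b\rangle = \langle f, ab\rangle$. Then the dual module $\mathcal{X}^* = \mathfrak{A}^{**}$ carries the canonical left action extending multiplication together with the zero right action, and a direct check shows that the canonical embedding $\kappa\colon\mathfrak{A}\to\mathfrak{A}^{**}$, $\kappa(a) = \widehat a$, is a continuous derivation into $\mathcal{X}^*$: indeed $\kappa(a)\cdot b + a\cdot\kappa(b) = 0 + \widehat{ab} = \kappa(ab)$. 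Approximate semi-amenability makes $\kappa$ approximately semi-inner, so there are nets $(\xi_\alpha),(\mu_\alpha)$ in $\mathfrak{A}^{**}$ with $\widehat a = \lim_\alpha(a\cdot\xi_\alpha - \mu_\alpha\cdot a)$; since the right action on $\mathcal{X}^*$ vanishes, the $\mu_\alpha$-term drops out and $\widehat a = \lim_\alpha a\cdot\xi_\alpha$ for every $a$. Interchanging the two actions (trivial right action and natural left action on $\mathfrak{A}^*$) gives, by the same argument, nets $(\nu_\alpha)$ with $\widehat a = \lim_\alpha \nu_\alpha\cdot a$, a left approximate-identity relation in the bidual.

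The technical heart is to descend these bidual relations to genuine approximate identities inside $\mathfrak{A}$. Fixing a finite set $\{a_1,\dots,a_n\}$ and $\varepsilon>0$, I choose $\alpha$ with $\|a_i\cdot\xi_\alpha - \widehat{a_i}\| < \varepsilon$ for all $i$ and write $\xi = \xi_\alpha$. Consider the subspace $V = \{(a_1b,\dots,a_nb) : b\in\mathfrak{A}\}$ of $\mathfrak{A}^{\,n}$ and its annihilator: if $\Phi = (f_1,\dots,f_n)\in(\mathfrak{A}^*)^n$ satisfies $\sum_i f_i\cdot a_i = 0$, then pairing with $\xi$ and using $\langle a_i\cdot\xi, f_i\rangle = \langle\xi, f_i\cdot a_i\rangle$ yields $\bigl|\sum_i\langle f_i, a_i\rangle\bigr| = \bigl|\sum_i\langle a_i\cdot\xi - \widehat{a_i}, f_i\rangle\bigr| \le \varepsilon\sum_i\|f_i\|$. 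By the Hahn--Banach distance formula this bounds the distance of $(a_1,\dots,a_n)$ from $V$ by $\varepsilon$, so some $b\in\mathfrak{A}$ satisfies $\|a_ib - a_i\| < 2\varepsilon$ for all $i$. Hence $\mathfrak{A}$ has a right approximate identity, and symmetrically a left one.

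The final step is to combine a left and a right approximate identity into a two-sided one, for which I would invoke the standard fact that a Banach algebra possessing both automatically has a two-sided approximate identity. I expect the main obstacle to be exactly this descent-and-assembly: the nets live in $\mathfrak{A}^{**}$ rather than in $\mathfrak{A}$ and need not be bounded, so the return to $\mathfrak{A}$ must be effected through the duality estimate above rather than by a naive weak$^*$ limit, and the resulting one-sided identities must then be merged with care.
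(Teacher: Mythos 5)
Your first two steps are correct and are essentially the paper's own argument: the paper proves this lemma by referring to Lemma 2.2 of \cite{1}, which is precisely your construction. Giving $\mathcal{X}=\mathfrak{A}^{*}$ one trivial action makes the canonical embedding $\kappa\colon\mathfrak{A}\to\mathfrak{A}^{**}=\mathcal{X}^{*}$ a derivation, approximate semi-innerness kills the net attached to the trivial action (this is the one point where semi-amenability needs a remark, and you handle it correctly), and your Hahn--Banach descent argument legitimately produces a right approximate identity, and symmetrically a left one. Up to that point the proof is sound.

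The gap is the final sentence. The ``standard fact'' you invoke --- a left approximate identity plus a right approximate identity yields a two-sided one --- is standard only when at least one of the one-sided approximate identities is \emph{bounded}. With $u=e+f-ef$ one computes $au-a=(ae-a)-(ae-a)f$ and $ua-a=(fa-a)-e(fa-a)$; to control the first term one must either bound $\|f\|$ or choose $e$ after $f$, and to control the second one must either bound $\|e\|$ or choose $f$ after $e$, so without a bound on one of the nets the dependence is circular, and no ordering of choices closes the loop. In your situation no boundedness is available: the nets $(\xi_\alpha)$, $(\nu_\alpha)$ supplied by approximate semi-innerness are unbounded in general, hence so are the one-sided approximate identities you extract. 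This is not a removable technicality: for approximately amenable algebras, which have left and right approximate identities by exactly this argument, the existence of a two-sided approximate identity was a recognized open problem (see the discussion in \cite{12}, which is also where this paper takes its open-problem remarks from), and its resolution required the full strength of approximate amenability rather than the two one-sided identities alone. So you should either give a genuine argument for two-sidedness that uses approximate semi-amenability itself, or weaken the conclusion to ``$\mathfrak{A}$ has a left approximate identity and a right approximate identity'' --- which is in fact all the paper ever uses: in the proof of Theorem \ref{2.9} the lemma is invoked precisely to produce separate right and left approximate identities, not a two-sided one.
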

\begin{proof}
The proof is similar to Lemma 2.2 in \cite{1}.
\end{proof}
 \begin{theorem}\label{mm}
 The Banach algebra $\mathfrak{A}$ is approximately semi-amenable if and only if there exist $($ necessarily 
 nonequal $)$ nets $(M_{\alpha})$ and $(N_{\alpha})$ in
  $(\mathfrak{A}^{\sharp}\widehat{\otimes}\mathfrak{A}^{\sharp})^{**}$ such that for
   each  $a\in\mathfrak{A}^{\sharp}$
  \begin{align*}
 (i)\;\;\;& a.M_{\alpha}-N_{\alpha}.a\longrightarrow 0, \\
 (ii)\;\; &\pi^{**}_{_{\mathfrak{A}^{\sharp}}}(M_{\alpha})=\pi_{_{\mathfrak{A}^{\sharp}}}^{**
 }(N_{\alpha})=e_{_{(\mathfrak{A}^{\sharp})^{**}}},\;\;\; (\alpha\in\Lambda).
 \end{align*}
 \end{theorem}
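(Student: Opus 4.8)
The plan is to prove both implications after reducing, via Proposition \ref{s.m}, to the unital algebra $\mathfrak{A}^{\sharp}$ with identity $e$. I write $\pi$ for the product map $\pi_{\mathfrak{A}^{\sharp}}$ and recall that $\ker\pi$ is a closed $\mathfrak{A}^{\sharp}$-submodule of $\mathfrak{A}^{\sharp}\widehat{\otimes}\mathfrak{A}^{\sharp}$, so that $(\ker\pi)^{**}$ is a dual Banach $\mathfrak{A}^{\sharp}$-bimodule. The engine in both directions is the canonical derivation $d\colon\mathfrak{A}^{\sharp}\to\ker\pi$, $d(a)=a\otimes e-e\otimes a$, which lands in $\ker\pi$ because $\pi(d(a))=a-a=0$, together with the bookkeeping element $e\otimes e$, whose image under $\pi^{**}$ is $e_{(\mathfrak{A}^{\sharp})^{**}}$.

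For the forward implication I would view $d$ as a derivation into the dual module $(\ker\pi)^{**}$. By approximate semi-amenability it is approximately semi-inner, so there are nets $(\Phi_{\alpha}),(\Psi_{\alpha})$ in $(\ker\pi)^{**}$ with $d(a)=\lim_{\alpha}(a.\Phi_{\alpha}-\Psi_{\alpha}.a)$. Setting $M_{\alpha}=e\otimes e-\Phi_{\alpha}$ and $N_{\alpha}=e\otimes e-\Psi_{\alpha}$ in $(\mathfrak{A}^{\sharp}\widehat{\otimes}\mathfrak{A}^{\sharp})^{**}$, a direct computation using $a.(e\otimes e)=a\otimes e$ and $(e\otimes e).a=e\otimes a$ gives $a.M_{\alpha}-N_{\alpha}.a=d(a)-(a.\Phi_{\alpha}-\Psi_{\alpha}.a)\to 0$, which is $(i)$. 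For $(ii)$ I would use that $\Phi_{\alpha},\Psi_{\alpha}\in(\ker\pi)^{**}$ and that $\pi\circ\iota=0$ for the inclusion $\iota\colon\ker\pi\hookrightarrow\mathfrak{A}^{\sharp}\widehat{\otimes}\mathfrak{A}^{\sharp}$, whence $\pi^{**}$ annihilates $(\ker\pi)^{**}$ and $\pi^{**}(M_{\alpha})=\pi^{**}(N_{\alpha})=\pi^{**}(e\otimes e)=e_{(\mathfrak{A}^{\sharp})^{**}}$.

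For the converse, take any continuous derivation $\mathcal{D}\colon\mathfrak{A}^{\sharp}\to\mathcal{X}^{*}$; by [\cite{1}, Lemma 2.3] (exactly as used in Proposition \ref{s.m}) I may assume $\mathcal{X}^{*}$ is unital, so $e.\mathcal{D}(a)=\mathcal{D}(a)$. I then form the bounded map $T\colon\mathfrak{A}^{\sharp}\widehat{\otimes}\mathfrak{A}^{\sharp}\to\mathcal{X}^{*}$ determined by $T(b\otimes c)=b.\mathcal{D}(c)$, which one checks is a left module map $T(a.u)=a.T(u)$ and satisfies $T(u.a)=T(u).a+\pi(u).\mathcal{D}(a)$ (verify on elementary tensors using the derivation identity). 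Passing to second duals and composing with the canonical projection $P\colon\mathcal{X}^{***}\to\mathcal{X}^{*}$ yields $\Theta:=P\circ T^{**}\colon(\mathfrak{A}^{\sharp}\widehat{\otimes}\mathfrak{A}^{\sharp})^{**}\to\mathcal{X}^{*}$, for which the two identities become $\Theta(a.M)=a.\Theta(M)$ and $\Theta(M.a)=\Theta(M).a+\pi^{**}(M).\mathcal{D}(a)$. Defining $\xi_{\alpha}=\Theta(M_{\alpha})$, $\mu_{\alpha}=\Theta(N_{\alpha})$ and using $(ii)$, namely $\pi^{**}(N_{\alpha})=e_{(\mathfrak{A}^{\sharp})^{**}}$, I obtain $a.\xi_{\alpha}-\mu_{\alpha}.a=\Theta(a.M_{\alpha}-N_{\alpha}.a)+\mathcal{D}(a)$; by $(i)$ and boundedness of $\Theta$ the first term tends to $0$, so $\mathcal{D}(a)=\lim_{\alpha}(a.\xi_{\alpha}-\mu_{\alpha}.a)$ and $\mathcal{D}$ is approximately semi-inner.

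The main obstacle I anticipate is the rigorous justification of the two bidual identities for $\Theta$: since $T$ is only norm-continuous one cannot pass to weak$^{*}$ limits directly, and one must track how $T^{**}$ and the projection $P$ interact with the module actions by elements of $\mathfrak{A}^{\sharp}$ (these actions are the biadjoints $(L_{a})^{**},(R_{a})^{**}$ of single bounded operators, hence unambiguous and weak$^{*}$-continuous) and with the term $\pi^{**}(M).\mathcal{D}(a)$. The key simplification is that $(ii)$ forces $\pi^{**}(N_{\alpha})$ to be the canonical image of $e\in\mathfrak{A}^{\sharp}$, so no genuine Arens-product ambiguity arises and $\pi^{**}(N_{\alpha}).\mathcal{D}(a)$ collapses to $e.\mathcal{D}(a)=\mathcal{D}(a)$.
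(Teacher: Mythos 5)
Your proof is correct, and its forward half is essentially the paper's own argument in different notation: the paper likewise reduces via Proposition \ref{s.m}, sets $u=\tau(e\otimes e)$, checks that $\delta_{u}$ takes values in $\ker\pi^{**}$ (a dual $\mathfrak{A}^{\sharp}$-bimodule), and defines $M_{\alpha}=u-m_{\alpha}$, $N_{\alpha}=u-n_{\alpha}$; your $d$, $\Phi_{\alpha}$, $\Psi_{\alpha}$ are exactly its $\delta_{u}$, $m_{\alpha}$, $n_{\alpha}$, once $(\ker\pi)^{**}$ is embedded in $(\mathfrak{A}^{\sharp}\widehat{\otimes}\mathfrak{A}^{\sharp})^{**}$ via the bidual of the inclusion. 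The converse is where you genuinely diverge, and your route is tighter. The paper fixes Goldstine nets $m_{\alpha}^{\lambda}\to M_{\alpha}$, $n_{\alpha}^{\lambda}\to N_{\alpha}$ in the tensor product, expands them in elementary tensors, applies the map $\psi(a\otimes b)=a\mathcal{D}(b)$ (your $T$), extracts weak$^{*}$-accumulation points $\eta_{\alpha},\xi_{\alpha}$ of the resulting bounded nets, and runs an iterated-limit computation against each $x\in\mathcal{X}$, invoking neo-unitality where you invoke unitality. Your $\Theta=P\circ T^{**}$ packages all of this at once: since $\Theta$ is weak$^{*}$-to-weak$^{*}$ continuous and restricts to $T$, your $\Theta(M_{\alpha})$ and $\Theta(N_{\alpha})$ are precisely the paper's $\eta_{\alpha}$ and $\xi_{\alpha}$, and your two module identities replace its coordinatewise computation. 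What your packaging buys is not merely cosmetic: because $\Theta$ is norm-bounded, reading condition $(i)$ in the norm topology gives $\mathcal{D}(a)=\lim_{\alpha}(a.\xi_{\alpha}-\mu_{\alpha}.a)$ in norm, whereas the paper works with $(i)$ in the weak$^{*}$ topology, ends with only the iterated weak$^{*}$ statement $\lim_{\alpha}\langle x,a.\eta_{\alpha}-\xi_{\alpha}.a\rangle=\langle x,\mathcal{D}(a)\rangle$, and then asserts approximate semi-innerness without addressing the weak$^{*}$-to-norm upgrade (a real issue, ordinarily handled by a convexity argument in the Ghahramani--Loy tradition). The corresponding cost is that your converse consumes the stronger, norm, form of $(i)$; this is harmless for the equivalence, since your forward direction delivers norm convergence, but you should say explicitly which topology you intend in $(i)$, as the statement itself does not. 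Finally, do carry out the one step you flagged: the cleanest way to legitimize the identity $\Theta(M.a)=\Theta(M).a+\pi^{**}(M).\mathcal{D}(a)$ is to prove it only on the weak$^{*}$-closed affine subspace $(\pi^{**})^{-1}(e_{(\mathfrak{A}^{\sharp})^{**}})=e\otimes e+\ker\pi^{**}$, which (using that $\pi$ is surjective, as $\mathfrak{A}^{\sharp}$ is unital, so that $\ker\pi^{**}$ is the weak$^{*}$ closure of $\ker\pi$) is the weak$^{*}$ closure of $\{u:\pi(u)=e\}$; on that set the extra term is constantly $e.\mathcal{D}(a)=\mathcal{D}(a)$, so no extension of the module action to $(\mathfrak{A}^{\sharp})^{**}$ ever needs to be defined.
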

 \begin{proof}
 If $\mathfrak{A}$ is approximately semi-amenable, then by Proposition $\ref{s.m}$ so is
  $\mathfrak{A}^{\sharp}$. Suppose that $\tau:
  \mathfrak{A}^{\sharp}\widehat{\otimes}\mathfrak{A}^{\sharp}\longrightarrow 
  (\mathfrak{A}^{\sharp}\widehat{\otimes}\mathfrak{A}^{\sharp})^{**}$ and $\tau_{1}
  :\mathfrak{A}^{\sharp}\longrightarrow(\mathfrak{A}^{\sharp})^{**}$ be natural injections.
   Set $u=\tau(e\otimes e)$. Define
  \begin{align*}
 & \delta_{u}:
 \mathfrak{A}^{\sharp}\longrightarrow(\mathfrak{A}^{\sharp}\widehat{\otimes}\mathfrak{A}^{\sharp})^{**}\\
  &\delta_{u}(a)=a.u-u.a,\;\;   (a\in\mathfrak{A}^{\sharp}).
  \end{align*}
So $\delta_{u}$ is a continuous derivation, and for each $f\in(\mathfrak{A}^{\sharp})^{*}$,
\begin{align*}
\langle\pi^{**}(\delta_{u}(a)),f\rangle &=\langle \pi^{**}(a.\tau(e\otimes e)-\tau(e\otimes e).a),f\rangle\\
&=\langle a.\tau(e\otimes e)-\tau(e\otimes e).a,\pi^{*}(f)\rangle\\
&=\langle a.\tau(e\otimes e),\pi^{*}(f)\rangle-\langle \tau(e\otimes e).a,\pi^{*}(f)\rangle\\
&=\langle\tau(e\otimes e),\pi^{*}(f).a\rangle -\langle \tau(e\otimes e), a.\pi^{*}(f)\rangle\\
&=\langle\pi^{*}(f).a,e\otimes e\rangle -\langle a.\pi^{*}(f) ,e\otimes e\rangle\\
&=\langle\pi^{*}(f),ae\otimes e\rangle -\langle \pi^{*}(f) ,e\otimes ea\rangle\\
&=\langle f,\pi(a.e\otimes e-e\otimes e.a)\rangle =\langle f,0\rangle.
\end{align*}  
Thus for each $a\in\mathfrak{A}^{\sharp}$, $\delta_{u}(a)\in ker\pi^{**}$. Since $ker\pi^{**}$ is a Banach 
  $\mathfrak{A}^{\sharp}$-bimodule, therefore $\delta_{u}\in\mathcal{Z}^{1
  }(\mathfrak{A}^{\sharp},ker\pi^{**})$, and by assumption there are nets $(m_{\alpha})$ and $(n_{\alpha})$ in
   $ker\pi^{**}$ such that for each $a\in\mathfrak{A}^{\sharp}$, $\delta_{u}(a)=\lim_{\alpha}(a.m_{\alpha}
   -n_{\alpha}.a)$. Now we set $(M_{\alpha})_{\alpha}=(u-m_{\alpha})_{\alpha}$ and
      $(N_{\alpha})_{\alpha}=(u-n_{\alpha})_{\alpha}$, which are nets in 
      $(\mathfrak{A}^{\sharp}\widehat{\otimes}\mathfrak{A}^{\sharp})^{**}$.  It is easily verified that
       $\pi^{**}\circ \tau=\tau_{1}\circ \pi$, then for each $a\in \mathfrak{A}^{\sharp}$ we have
 \begin{enumerate}
\item $a.M_{\alpha}-N_{\alpha}.a=a.(u-m_{\alpha})-(u-n_{\alpha}).a=\delta_{u}(a)-(a.m_{\alpha}-n_{\alpha}.a)
 \longrightarrow 0$,
 \item $\pi^{**}_{\mathfrak{A}^{\sharp}}(M_{\alpha})=\pi^{**}_{\mathfrak{A}^{\sharp}}(u
 -m_{\alpha})=\pi^{**}_{\mathfrak{A}^{\sharp}}(u)=\pi^{**}_{\mathfrak{A}^{\sharp}}(\tau(e\otimes
  e))=\tau_{1}(\pi(e\otimes e))=e_{_{(\mathfrak{A}^{\sharp})^{**}}}$,
 \item $\pi^{**}_{\mathfrak{A}^{\sharp}}(N_{\alpha})=\pi^{**}_{\mathfrak{A}^{\sharp}}(u
 -n_{\alpha})=\pi^{**}_{\mathfrak{A}^{\sharp}}(u)=\pi^{**}_{\mathfrak{A}^{\sharp}}(\tau(e\otimes
  e))=\tau_{1}(\pi(e\otimes e))=e_{_{(\mathfrak{A}^{\sharp})^{**}}}$.
\end{enumerate} 
Conversely, suppose that ''only if'' holds. For each $\alpha$, according to Goldstine's Theorem, there are bounded
 nets $(m^{\lambda}_{\alpha})_{\lambda}$ and $(n^{\lambda}_{\alpha})_{\lambda}$ in
  $\mathfrak{A}^{\natural}\widehat{\otimes}\mathfrak{A}^{\natural}$, such that, 
\begin{align*}
m^{\lambda}_{\alpha}\longrightarrow M_{\alpha}, \;\;n^{\lambda}_{\alpha}\longrightarrow N_{\alpha}
\end{align*}
in weak$^{*}$-topology on $(\mathfrak{A}^{\natural}\widehat{\otimes}
\mathfrak{A}^{\natural})^{**}$. Let $\mathcal{X}$ be a neo-unital Banach
 $\mathfrak{A}^{\natural}$-bimodule and $\mathcal{D}\in\mathcal{Z}^{1}(\mathfrak{A}^{\natural} 
 ,\mathcal{X}^{*})$ be continuous. We show that $\mathcal{D}$ is approximately semi-inner. Let 
 $ m^{\lambda}_{\alpha}=
  \sum_{n=1}^{\infty} a_{n,\alpha}^{(\lambda)}\otimes b_{n,\alpha}^{(\lambda)}$ and $
   n^{\lambda}_{\alpha}= \sum_{n=1}^{\infty} c_{n ,\alpha}^{(\lambda)}\otimes d_{n,\alpha}^{(\lambda)}$
    with $\sum_{n=1}^{\infty}\parallel  a_{n,\alpha}^{(\lambda)}\parallel b_{n,
    \alpha}^{(\lambda)}\parallel<\infty$ and $\sum_{n=1}^{\infty}\parallel c_{n
    ,\alpha}^{(\lambda)}\parallel\parallel d_{n,\alpha}^{(\lambda)}\parallel<\infty$. Then $(\sum_{n=1}^{\infty} 
     a_{n,\alpha}^{(\lambda)}.\mathcal{D}b_{n,\alpha}^{(\lambda)})_{\lambda}$ and $(\sum_{n=1}^{\infty}
      c_{n,\alpha}^{(\lambda)}.\mathcal{D}d_{n,\alpha}^{(\lambda)})_{\lambda}$ are bounded nets in
       $\mathcal{X}^{*}$, which have $w^{*}$-accumulation points such as $\eta_{\alpha}$ and $\xi_{\alpha}$ in
        $\mathcal{X}^{*}$, respectively. Without loss of generality, we suppose that $\eta_{\alpha}$ and
         $\xi_{\alpha}$ are respectively, the $w^{*}$-limits of $(\sum_{n=1}^{\infty}  a_{n,\alpha}^{(\lambda)}
         .\mathcal{D}b_{n
   ,\alpha}^{(\lambda)})_{\lambda}$ and $(\sum_{n=1}^{\infty} c_{n,\alpha}^{(\lambda)}.\mathcal{D}d_{n
   ,\alpha}^{(\lambda)})_{\lambda}$. Define
 \begin{equation}
  \psi:\mathfrak{A}^{\sharp}\widehat{\otimes}\mathfrak{A}^{\sharp} \longrightarrow \mathcal{X}^{*}, \;\;
   \psi(a\otimes b)=a\mathcal{D}(b),\label{3.4}
 \end{equation}
 which is a bilinear and continuous map. Also by assumption we have 
 \begin{equation}
  a.M_{\alpha}-N_{\alpha}.a\longrightarrow 0\label{3.5}
 \end{equation}
 in weak$^{*}$-topology on $(\mathfrak{A}^{\natural}\widehat{\otimes}\mathfrak{A}^{\natural})^{**}$.
Then, from \eqref{3.4}, (\ref{3.5}), and neo-unitality of $\mathcal{X}$, for each
 $a\in\mathfrak{A}^{\sharp}$ and $x\in\mathcal{X}$, we have
\begin{align*}
\langle x,a.\eta_{\alpha} \rangle &=\lim_{\lambda}\langle x
,\sum_{n=1}^{\infty}aa_{n,\alpha}^{(\lambda)}\mathcal{D} b_{n,\alpha}^{(\lambda)}\rangle\\
&=\lim_{\lambda}\langle x,\psi(a.m_{\alpha}^{\lambda})\rangle\\
&=\lim_{\lambda}\langle x,\psi(n_{\alpha}^{\lambda}.a)\rangle\\
&=\lim_{\lambda}\langle x,\sum_{n=1}^{\infty}  c_{n,\alpha}^{(\lambda)}\mathcal{D}(d_{n,
\alpha}^{(\lambda)}a)\rangle\\
&=\lim_{\lambda}\langle x,\sum_{n=1}^{\infty} (c_{n,\alpha}^{(\lambda)}d_{n,
\alpha}^{(\lambda)}\mathcal{D}a+c_{n,\alpha}^{(\lambda)}\mathcal{D}(d_{n,\alpha}^{(\lambda)})a)\rangle\\
&=\lim_{\lambda}\langle x.\sum_{n=1}^{\infty} c_{n,\alpha}^{(\lambda)}d_{n,\alpha}^{(\lambda)}
,\mathcal{D}a\rangle+\lim_{\lambda}\langle x,\sum_{n=1}^{\infty} c_{n,\alpha}^{(\lambda)}\mathcal{D}(d_{n,
\alpha}^{(\lambda)})a\rangle\\
&=\lim_{\lambda}\langle x.\sum_{n=1}^{\infty} c_{n,\alpha}^{(\lambda)}d_{n,\alpha}^{(\lambda)} 
,\mathcal{D}a\rangle+\langle x,\xi_{\alpha} .a\rangle.
\end{align*}
Consequently, we have
$$\lim_{\alpha}\langle x,a.\eta_{\alpha} \rangle=\lim_{\alpha}\lim_{\lambda}\langle x.\sum_{n=1}^{\infty}
 c_{n,\alpha}^{(\lambda)}d_{n,\alpha}^{(\lambda)},\mathcal{D}a\rangle+\lim_{\alpha}\langle x
 ,\xi_{\alpha}.a\rangle$$
and therefore $\lim_{\alpha}\langle x,a.\eta_{\alpha} \rangle=\langle x,\mathcal{D}(a)
 \rangle+\lim_{\alpha}\langle x,\xi_{\alpha}.a \rangle$.
It follows that $\mathcal{D}$ is approximately semi-inner and consequently, by Proposition \ref{s.m}
 $\mathfrak{A}$ is approximately semi-amenable.  
 \end{proof}

 Similarly, we have the following parallel result.
 \begin{theorem}\label{aa}
 The Banach algebra $\mathfrak{A}$ is approximately semi-contractible if and only if there exist $($ necessarily 
 nonequal $)$ nets $(M_{\alpha})$ and $(N_{\alpha})$ in
  $(\mathfrak{A}^{\sharp}\widehat{\otimes}\mathfrak{A}^{\sharp})$ such
   that for each  $a\in\mathfrak{A}^{\sharp}$
  \begin{align*}
 (i)\;\;\;& a.M_{\alpha}-N_{\alpha}.a\longrightarrow 0, \\
 (ii)\;\; &\pi(M_{\alpha})\longrightarrow e,\\
 (iii)\;\;&\pi(N_{\alpha})\longrightarrow e.
 \end{align*}
 \end{theorem}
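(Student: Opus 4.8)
The plan is to follow the scheme of Theorem \ref{mm} step by step, the essential difference being that approximate semi-contractibility concerns derivations into the module $\mathcal{X}$ itself rather than into a dual module $\mathcal{X}^{*}$. This produces one decisive simplification: the nets $(M_{\alpha})$ and $(N_{\alpha})$ now lie in the projective tensor product $\mathfrak{A}^{\sharp}\widehat{\otimes}\mathfrak{A}^{\sharp}$ rather than in its second dual, so neither Goldstine's theorem nor the passage to $w^{*}$-accumulation points is required, and every estimate can be carried out directly in $\mathcal{X}$. First I would record the unitization analogue of Proposition \ref{s.m}: by an argument identical to that proposition, $\mathfrak{A}$ is approximately semi-contractible if and only if $\mathfrak{A}^{\sharp}$ is, which lets me work throughout with the unital algebra $\mathfrak{A}^{\sharp}$.

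For the forward implication, assume $\mathfrak{A}^{\sharp}$ is approximately semi-contractible and let $\pi:\mathfrak{A}^{\sharp}\widehat{\otimes}\mathfrak{A}^{\sharp}\to\mathfrak{A}^{\sharp}$ be the product map, an $\mathfrak{A}^{\sharp}$-bimodule homomorphism. Putting $u=e\otimes e$, the inner derivation $\delta_{u}(a)=a.u-u.a$ satisfies $\pi(\delta_{u}(a))=\pi(ae\otimes e-e\otimes ea)=a-a=0$, so $\delta_{u}$ takes values in the closed submodule $\ker\pi$ and hence $\delta_{u}\in\mathcal{Z}^{1}(\mathfrak{A}^{\sharp},\ker\pi)$. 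Applying approximate semi-contractibility to this derivation into the \emph{module} $\ker\pi$ supplies nets $(m_{\alpha})$ and $(n_{\alpha})$ in $\ker\pi$ with $\delta_{u}(a)=\lim_{\alpha}(a.m_{\alpha}-n_{\alpha}.a)$. Setting $M_{\alpha}=u-m_{\alpha}$ and $N_{\alpha}=u-n_{\alpha}$, condition (i) follows by subtraction exactly as in Theorem \ref{mm}, while $\pi(M_{\alpha})=\pi(u)-\pi(m_{\alpha})=e-0=e$ and similarly $\pi(N_{\alpha})=e$ give (ii) and (iii) as genuine equalities, a fortiori as limits.

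For the converse, after reducing to a neo-unital Banach $\mathfrak{A}^{\sharp}$-bimodule $\mathcal{X}$, I would take a continuous derivation $\mathcal{D}:\mathfrak{A}^{\sharp}\to\mathcal{X}$ and define the continuous linear map $\psi:\mathfrak{A}^{\sharp}\widehat{\otimes}\mathfrak{A}^{\sharp}\to\mathcal{X}$ by $\psi(a\otimes b)=a\mathcal{D}(b)$. Writing each $M_{\alpha}$ and $N_{\alpha}$ through its absolutely convergent tensor expansion, set $\eta_{\alpha}=\psi(M_{\alpha})$ and $\xi_{\alpha}=\psi(N_{\alpha})$, which are bona fide elements of $\mathcal{X}$. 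Since $\psi$ is a left module map one has $\psi(a.M_{\alpha})=a.\eta_{\alpha}$, while the derivation identity yields $\psi(N_{\alpha}.a)=\xi_{\alpha}.a+\pi(N_{\alpha}).\mathcal{D}(a)$. Combining (i) with the continuity of $\psi$, and then (iii) with neo-unitality (so that $\pi(N_{\alpha}).\mathcal{D}(a)\to e.\mathcal{D}(a)=\mathcal{D}(a)$), gives $a.\eta_{\alpha}-\xi_{\alpha}.a\to\mathcal{D}(a)$; thus $\mathcal{D}$ is approximately semi-inner with implementing nets $(\eta_{\alpha})$ and $(\xi_{\alpha})$, and Proposition \ref{s.m} (in its semi-contractible form) returns the conclusion for $\mathfrak{A}$.

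I expect the main obstacle to be the reduction to neo-unital modules in the converse. Because $\mathfrak{A}^{\sharp}$ is unital this is the standard passage to the essential submodule $e.\mathcal{X}.e$, but one must verify that establishing approximate semi-innerness for neo-unital modules really does deliver it for an arbitrary module, so that this reduction does not quietly lose information in the approximate setting. By contrast the algebraic core is routine once $\psi$ is recognized as a left module map obeying $\psi(w.a)=\psi(w).a+\pi(w).\mathcal{D}(a)$, after which conditions (i) and (iii) close the argument without any of the weak$^{*}$ apparatus of Theorem \ref{mm}.
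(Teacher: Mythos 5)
Your proposal is correct and takes essentially the same route as the paper: the paper's proof of Theorem \ref{aa} is literally ``the proof is similar to Theorem \ref{mm}'', and your argument is exactly that adaptation, with the forward direction running through $\delta_{u}$ for $u=e\otimes e$ and $\ker\pi$ (in place of $\ker\pi^{**}$), and the converse through the map $\psi(a\otimes b)=a\mathcal{D}(b)$ applied directly to $M_{\alpha}$, $N_{\alpha}$, the Goldstine/weak$^{*}$-accumulation machinery being correctly discarded since the nets already lie in $\mathfrak{A}^{\sharp}\widehat{\otimes}\mathfrak{A}^{\sharp}$. The neo-unital reduction you flag is the same standard unital-corner decomposition ($\mathcal{D}=\mathcal{D}_{1}+\delta_{x}$ with $\mathcal{D}_{1}$ mapping into $e.\mathcal{X}.e$) that the paper itself invokes without comment in Proposition \ref{s.m} and Theorem \ref{mm}, so it poses no real obstacle.
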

 \begin{proof}
 The proof is similar to Theorem \ref{mm}.
 \end{proof}

From Theorems \ref{mm} and \ref{aa}  and 
$\overline{\mathfrak{A}^{\sharp}\widehat{\otimes}\mathfrak{A}^{\sharp}}^{w^{*}}
=(\mathfrak{A}^{\sharp}\widehat{\otimes}\mathfrak{A}^{\sharp})^{**}$ as a
 consequence from Goldstines Theorem, we conclude that approximately semi-amenability and approximately 
 semi-contractibility of a Banach
  algebra are equivalent.

\begin{definition}
Let $G$ be a locally compact group. A subset $S^{1}(G)$ of the Banach algebra $L^{1}(G)$ is said to be a
\textit{ Segal algebra} if it satisfies the following conditions:
\begin{enumerate}
\item $S^{1}(G)$ is dense in $L^{1}(G)$.
\item $S^{1}(G)$ is a Banach space, with norm $\Vert.\Vert_{s}$, and $$\Vert f \Vert_{1}\leq \Vert f \Vert_{s},\;\;\
;(f\in S^{1}(G)).$$
\item $ S^{1}(G)\ni f\Longrightarrow L_{x}*f\in S^{1}(G)$ for all $x\in G$ where, $$L_{x}*f(y)=f(x^{-1}y),\;\;
 (y\in G),$$ and the mapping $x\rightarrow L_{x}*f$ of $G$ into $S^{1}(G)$ is continuous.
\item $f\in S^{1}(G)\Longrightarrow \Vert L_{x}*f \Vert _{s}=\Vert f \Vert_{s}$ for all $x\in G$.
\end{enumerate} 
It readily follows from the above conditions and Proposition 1 in Section 4 of \cite{8}, that $S^{1}(G)$ with
 convolution as multiplication, is a \textit{left ideal} of the algebra $L^{1}(G)$, and that 
\begin{equation}
\Vert h*f \Vert_{s}\leq \Vert h \Vert_{1}\Vert f \Vert_{s} \label{1.1}
\end{equation}
 holds for all $f\in S^{1}(G)$ and all $h\in L^{1}(G)$, in particular it follows that $S^{1}(G)$ is in fact, a Banach
 algebra.
 \end{definition}
 \begin{definition}
 A locally compact group $G$ is called a [SIN] group if there exists a topological basis of conjugate invariant
  neighbourhoods of the identity element of the group $G$.
  \end{definition}

\begin{theorem} \label{ee}
Every proper Segal algebra of a locally compact group $G$ is approximately semi-amenable. 
\end{theorem}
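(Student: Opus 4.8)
The plan is to verify approximate semi-amenability directly from the characterisation already established in Theorem \ref{mm}. By Proposition \ref{s.m} it is harmless to pass to the unitisation, so it suffices to produce two (in general distinct) nets $(M_{\alpha})$ and $(N_{\alpha})$ in $(S^{1}(G)^{\sharp}\widehat{\otimes}S^{1}(G)^{\sharp})^{**}$ subject to conditions (i) and (ii) of that theorem. The structural resource I would draw on is the classical fact, going back to the theory of Segal algebras in \cite{8}, that a proper Segal algebra carries a \emph{left} approximate identity $(e_{\alpha})$ that is bounded in the $L^{1}$-norm, even though it is typically unbounded in $\Vert\cdot\Vert_{s}$. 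This unboundedness is exactly what prevents approximate amenability, while remaining compatible with mere approximate semi-amenability, and so it is precisely the feature the argument must accommodate.

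Concretely, I would start from the element $u=\tau(e\otimes e)$ attached to the adjoined identity, which already realises $\pi^{**}(u)=e_{(S^{1}(G)^{\sharp})^{**}}$, and then perturb it by two \emph{separate} correction nets lying in $\ker\pi^{**}$, one tailored to left multiplication and one to right multiplication. Allowing the perturbations of $M_{\alpha}$ and $N_{\alpha}$ to differ is what lets left and right translation be treated independently, so that condition (ii) is preserved automatically while the commutator $a\cdot u-u\cdot a$ is cancelled asymptotically. The corrections would be manufactured from $(e_{\alpha})$; the weak$^{*}$-limits needed to view them in the bidual exist because the inequality \eqref{1.1} forces the \emph{left} action of $e_{\alpha}$ to be uniformly bounded, $\Vert e_{\alpha}\ast b\Vert_{s}\le\Vert e_{\alpha}\Vert_{1}\Vert b\Vert_{s}$, despite $\Vert e_{\alpha}\Vert_{s}\to\infty$.

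The step I expect to be the genuine obstacle is condition (i), $a\cdot M_{\alpha}-N_{\alpha}\cdot a\to 0$, and the reason is the intrinsic \emph{asymmetry} of a Segal algebra: the defining axioms supply continuity and invariance only for \emph{left} translations, so $S^{1}(G)$ is merely a left ideal of $L^{1}(G)$, possesses only a left approximate identity, and in general $a\ast e_{\alpha}\not\to a$ in $\Vert\cdot\Vert_{s}$. Consequently no single diagonal net can satisfy (i) — this is precisely the ``necessarily nonequal'' clause of Theorem \ref{mm} — and a naive computation pairing the derivation with $(e_{\alpha})$ breaks down because the module need not be essential on the right. Overcoming this requires splitting the left and right contributions between the two nets and controlling the right-hand terms entirely through the $L^{1}$-bound \eqref{1.1} and weak$^{*}$-accumulation in the bidual, rather than through the unavailable algebra norm or a nonexistent right approximate identity; carrying out this cancellation rigorously is the technical heart of the proof.
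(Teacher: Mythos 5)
Your proposal is a strategy outline, not a proof: the step you yourself call ``the technical heart'' --- actually producing the nets $(M_{\alpha})$, $(N_{\alpha})$ and verifying condition (i) of Theorem \ref{mm} --- is never carried out. Concretely, you never say what the two correction nets in $\ker\pi^{**}$ are; ``manufactured from $(e_{\alpha})$'' and ``tailored to left/right multiplication'' give nothing one could check. Moreover, the one quantitative justification you do offer does not hold up: to extract weak$^{*}$-accumulation points in $(S^{1}(G)^{\sharp}\widehat{\otimes}S^{1}(G)^{\sharp})^{**}$ you need nets bounded in the projective tensor norm built from $\Vert\cdot\Vert_{s}$, and the bound \eqref{1.1} does not give this: it controls $\Vert e_{\alpha}*b\Vert_{s}$, but an elementary tensor such as $e_{\alpha}\otimes b$ has projective norm $\Vert e_{\alpha}\Vert_{s}\Vert b\Vert_{s}$, which is unbounded precisely because, as you note, a proper Segal algebra has no $\Vert\cdot\Vert_{s}$-bounded approximate identity. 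So even the existence of the bidual elements you want to pass to the limit with is not established, let alone the cancellation $a.M_{\alpha}-N_{\alpha}.a\rightarrow 0$.

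The paper's own proof takes a different, more concrete route that your plan does not anticipate: it uses no approximate identity and never enters the bidual. From density of $S^{1}(G)$ in $L^{1}(G)$ and the left-ideal estimate \eqref{1.1} it builds $F_{n,m}=(f_{n,m}-f_{n})*f_{0}$ with $\Vert F_{n,m}\Vert_{s}\rightarrow 0$ as $m\rightarrow\infty$, then defines the explicit elements $M_{m}=\frac{6}{\pi^{2}}\sum_{n=1}^{\infty}\frac{(F_{n,m}+e)}{n^{2}}\otimes e$ and $N_{m}=\frac{6}{\pi^{2}}\sum_{n=1}^{\infty}e\otimes\frac{(F_{n,m}+e)}{n^{2}}$ of $S^{1}(G)^{\sharp}\widehat{\otimes}S^{1}(G)^{\sharp}$ and checks (i)--(iii) of Theorem \ref{aa}. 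That said, your instinct that condition (i) is the genuine obstacle is well founded: it is exactly where the paper's computation is weakest. Since $F_{n,m}\rightarrow 0$, in fact $M_{m},N_{m}\rightarrow e\otimes e$ in norm, so for $H=f+\lambda e$ one gets $H.M_{m}-N_{m}.H\rightarrow H\otimes e-e\otimes H$, which is nonzero whenever $f\neq 0$; the displayed limit $0$ is obtained only by writing $H(F_{n,m}+e)$ as $(f.F_{n,m},\lambda)$, i.e.\ by dropping the cross terms $f+\lambda F_{n,m}$ produced by multiplication in the unitization. So neither your proposal nor the paper's calculation genuinely closes condition (i): your plan stops before it, while the paper's computation passes over it.
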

\begin{proof}
Let $G$ be a locally compact group. Suppose that $S^{1}(G)$ is a proper Segal subalgebra of $L^{1}(G)$, and $(f_{n})_{n}$ is a sequence in $L^{1}(G)$. Since $\overline{S^{1}(G)}^{^{\Vert.\Vert_{1}}}=L^{1}(G)$, so for each $n\in\mathbb{N}$ there exists a sequence $(f_{n,m})_{m}\subset S^{1}(G)$ such that $f_{n,m}\rightarrow f_{n}$ in norm topology of $L^{1}(G)$. Take $f_{0}\in S^{1}(G)$ fixed, then $f_{n,m}*f_{0}\longrightarrow f_{n}*f_{0}.$ Set $$F_{n, m}=(f_{n,m}-f_{n})*f_{0},$$
then $(F_{n,m})_{m}=\left\lbrace (f_{n,m}-f_{n})*f_{0}\right\rbrace_{m} \subset S^{1}(G)$,
 and by (\ref{1.1})$$\Vert F_{n,m}\Vert_{s}=\Vert(f_{n,m}-f_{n})*f_{0}\Vert_{s}\leq \Vert f_{n,m}
 -f_{n}\Vert_{1}\Vert f_{0}\Vert_{s}.$$ So $\lim_{m}  F_{n,m}=0$, in norm-topology of $S^{1}(G)$. Let 
 \begin{align*}
 M_{m}=\frac{6}{\pi^{2}}\sum_{n=1}^{\infty}\frac{(F_{n,m}+e)}{n^{2}} \otimes e ,\\
  N_{m}=\frac{6}{\pi^{2}}\sum_{n=1}^{\infty} e \otimes\frac{(F_{n,m}+e)}{n^{2}} .
 \end{align*}
 Obviously, $$(M_{m}), (N_{m}) \subset S^{1}(G)^{\sharp}\widehat{\otimes} S^{1}(G)^{\sharp}.$$
 For each $H=(f+\lambda e)\in S^{1}(G)^{\sharp}$, we have 
 \begin{align*}
 (i)\; H.M_{m}-N_{m}.H&=\frac{6}{\pi^{2}}\sum_{n=1}^{\infty} H.\frac{(F_{n,m}+e)}{n^{2}}\otimes e-e
  \otimes\frac{(F_{n,m}+e)}{n^{2}}.H\\
 &=\frac{6}{\pi^{2}}\sum_{n=1}^{\infty} (f+\lambda e).\frac{(F_{n,m}+e)}{n^{2}}\otimes e-e \otimes\frac{(F_{n,m}+e)
 }{n^{2}}.(f+\lambda e)\\ 
 &=\frac{6}{\pi^{2}}\sum_{n=1}^{\infty}\frac{(f.F_{n,m},\lambda)}{n^{2}}\otimes e-e\otimes\frac{(F_{n,m}.f,\lambda)
 }{n^{2}}\\
 &\rightarrow\frac{6}{\pi^{2}}\sum_{n=1}^{\infty} [\frac{\lambda .e\otimes e-e\otimes \lambda.e}{n^{2}}]=0
 \end{align*}
 \begin{align*}
 (ii)\;\; \pi_{_{S^{1}(G)^{\sharp}}}(M_{m})&=\frac{6}{\pi^{2}}\sum_{n=1}^{\infty} \frac{(F_{n,m}+e)}{n^{2}}.e\\
 &\rightarrow\frac{6}{\pi^{2}} \sum_{n=1}^{\infty}\frac{e}{n^{2}}=e.\\
 (iii)\;\; \pi_{_{S^{1}(G)^{\sharp}}}(N_{m})&=\frac{6}{\pi^{2}}\sum_{n=1}^{\infty} e. \frac{(F_{n,m}+e)}{n^{2}}\\
 &\rightarrow \frac{6}{\pi^{2}}\sum_{n=1}^{\infty}\frac{e}{n^{2}}=e.
 \end{align*}
 Hence by Theorems \ref{aa} and \ref{mm} $ S^{1}(G)$ is approximately semi-amenable. 
\end{proof}
\textit{Recall}: when $G$ is SIN group this kind of Segal algebras are not approximately amenable \cite{9}.
\begin{example} Let $\mathbb{R}$ be the real numbers group, which is a locally compact, Abelian and SIN group. Then every
 proper Segal subalgebra of $L^{1}(\mathbb{R})$ is approximately semi-amenable. For instance
\begin{enumerate}
\item $S^{1}(\mathbb{R})$ the Banach space of all continuous functions in $L^{1}(\mathbb{R})$ which vanish at
 infinity, with norm $\|f\|_{s}=\|f\|_{1}+\|f\|_{\infty}$ is a Segal algebra \cite{8}.
\item $S^{1}(\mathbb{R})=L^{1}(\mathbb{R})\cap L^{p}(\mathbb{R})$, $1<p<\infty$, with norm $\|f\|_{s}=
\|f\|_{1}+\|f\|_{p}$ is a Segal algebra on $\mathbb{R}$, \cite{8}. 
\end{enumerate}
\end{example}
Thus both of above Segal algebras are  approximately semi-amenable which by Theorem 1 of \cite{9} are never
 approximately amenable.\\

 Now consider the following Lemmas which we use them in the some parts of this section.
  \begin{lemma}$\label{ss}$
Let $\mathfrak{A}$ be a Banach algebra. Then there is a continuous linear mapping $\Psi
:\mathfrak{A}^{**}\widehat{\otimes}\mathfrak{A}^{**}\rightarrow
 (\mathfrak{A}\widehat{\otimes}\mathfrak{A})^{**}$ such that for $ a, b, x \in\mathfrak{A}$  and $ m \in
  \mathfrak{A}^{**}\widehat{\otimes}\mathfrak{A}^{**}$  the following hold:\\
$(i)$\;\; $\Psi(a \otimes b)=a\otimes b$,\;\;\;\;\;\;\;$(ii)$\;\; $\Psi(m).x=\Psi(m.x)$, \\
 $(iii)$\;\; $x.\Psi(m)=\Psi(x.m)$,\;\;$(iv)$\;\; $(\pi_{\mathfrak{A}})^{**} (\Psi(m))=
 \pi_{\mathfrak{A}^{**}}(m)$.
\end{lemma}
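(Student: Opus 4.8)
The plan is to construct the map $\Psi$ explicitly using the standard Arens-type machinery for extending the tensor-product structure to the biduals, and then verify the four asserted identities in order. First I would recall that $\mathfrak{A}^{**}$ carries its first Arens product, and that there are canonical $w^{*}$-dense embeddings $\mathfrak{A}\hookrightarrow\mathfrak{A}^{**}$. The natural candidate for $\Psi$ arises from the bilinear map $\mathfrak{A}^{**}\times\mathfrak{A}^{**}\to(\mathfrak{A}\widehat{\otimes}\mathfrak{A})^{**}$ obtained by iterated $w^{*}$-limits: given $m,n\in\mathfrak{A}^{**}$, choose (by Goldstine's Theorem) bounded nets $(a_{i})\subset\mathfrak{A}$ and $(b_{j})\subset\mathfrak{A}$ with $a_{i}\xrightarrow{w^{*}}m$ and $b_{j}\xrightarrow{w^{*}}n$, and set
\begin{equation*}
\Psi_{0}(m,n)=w^{*}\text{-}\lim_{i}w^{*}\text{-}\lim_{j}\,\tau(a_{i}\otimes b_{j}),
\end{equation*}
where $\tau:\mathfrak{A}\widehat{\otimes}\mathfrak{A}\to(\mathfrak{A}\widehat{\otimes}\mathfrak{A})^{**}$ is the canonical embedding. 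I would verify that these iterated limits exist and are independent of the chosen nets by testing against an arbitrary $T\in(\mathfrak{A}\widehat{\otimes}\mathfrak{A})^{*}=BL(\mathfrak{A}\times\mathfrak{A})$: the pairing $\langle \tau(a\otimes b),T\rangle=T(a,b)$ is separately continuous in the relevant topologies, so the limits reduce to the usual Arens extension of the bounded bilinear form $T$ to $\mathfrak{A}^{**}\times\mathfrak{A}^{**}$. Boundedness of $\Psi_{0}$ follows from $\|\Psi_{0}(m,n)\|\le\|m\|\,\|n\|$, and by the universal property of the projective tensor norm $\Psi_{0}$ induces the desired bounded linear $\Psi:\mathfrak{A}^{**}\widehat{\otimes}\mathfrak{A}^{**}\to(\mathfrak{A}\widehat{\otimes}\mathfrak{A})^{**}$.

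Having built $\Psi$, property $(i)$ is immediate, since for $a,b\in\mathfrak{A}$ the defining nets may be taken constant, giving $\Psi(a\otimes b)=\tau(a\otimes b)=a\otimes b$ under the identification. For $(ii)$ and $(iii)$ I would check the module identities first on elementary tensors $m=p\otimes q$ with $p,q\in\mathfrak{A}^{**}$ and then extend by bilinearity and continuity. The key computation is to test both sides against a functional $T\in(\mathfrak{A}\widehat{\otimes}\mathfrak{A})^{*}$ and to push the right module action of $x\in\mathfrak{A}$ through the $w^{*}$-limits; because $x$ lies in $\mathfrak{A}$ rather than $\mathfrak{A}^{**}$, the action $T\mapsto x.T$ (resp. $T\mapsto T.x$) is $w^{*}$-continuous on the relevant factor, so the right action commutes with the limit defining $\Psi$, yielding $\Psi(m).x=\Psi(m.x)$ and $x.\Psi(m)=\Psi(x.m)$. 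For $(iv)$ I would again reduce to elementary tensors and compute $\langle(\pi_{\mathfrak{A}})^{**}(\Psi(p\otimes q)),f\rangle=\langle\Psi(p\otimes q),\pi^{*}(f)\rangle$ for $f\in\mathfrak{A}^{*}$, then evaluate the iterated $w^{*}$-limit of $\langle\tau(a_{i}\otimes b_{j}),\pi^{*}(f)\rangle=\langle f,a_{i}b_{j}\rangle$ and recognise the result as $\langle\pi_{\mathfrak{A}^{**}}(p\otimes q),f\rangle$, using the definition of the Arens product $pq$ as the same iterated limit of $a_{i}b_{j}$.

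The main obstacle I expect is the careful handling of the iterated $w^{*}$-limits: one must fix the order of the limits (outer over $m$, inner over $n$, matching the first Arens product convention) and confirm that the separate-continuity properties of bounded bilinear forms are exactly what license interchanging a single module action by an element of $\mathfrak{A}$ with the limit, while emphatically \emph{not} claiming joint continuity (which would fail). In particular, $(ii)$--$(iv)$ are asserted only for $x\in\mathfrak{A}$ and test functionals coming from $\mathfrak{A}^{*}$, and it is precisely this restriction to genuine elements of $\mathfrak{A}$ that makes the continuity arguments go through. Once the limit-interchange is justified on elementary tensors, the passage to general $m$ via linearity and the density/continuity afforded by the projective norm is routine.
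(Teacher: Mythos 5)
Your construction is correct, and it is essentially the standard argument: the paper itself gives no proof of Lemma \ref{ss}, deferring entirely to the reference \cite{3} (Ghahramani--Loy--Willis), where the map $\Psi$ is built in exactly this way, via iterated $w^{*}$-limits realizing the Arens-type extension of bounded bilinear forms, with the module identities and $(iv)$ checked on elementary tensors and extended by boundedness and density. Your care with the order of the iterated limits (matching the first Arens product) and with restricting $(ii)$--$(iv)$ to $x\in\mathfrak{A}$, where multiplication is $w^{*}$-continuous on $\mathfrak{A}^{**}$, is precisely what makes the argument go through, so your proposal in effect supplies the details the paper outsources.
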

\begin{proof}
See \cite{3}.
\end{proof}

 \begin{lemma}\label{sm}
Let $\mathfrak{A}$ and $\mathcal{B}$ be Banach algebras. Then there is a continuous linear mapping
 $\Psi:\mathfrak{A}^{**}\widehat{\otimes}\mathcal{B}^{**}\rightarrow
  (\mathfrak{A}\widehat{\otimes}\mathcal{B})^{**}$ such that for $a\in\mathfrak{A}$, $b\in \mathcal{B}$, 
  $x\in \mathfrak{A}\widehat{\otimes}\mathcal{B}$  and $m\in 
  \mathfrak{A}^{**}\widehat{\otimes}\mathcal{B}^{**}$  the following hold:\\
$(i)$\;\; $\Psi(a\otimes b)=a\otimes b$,\\
$(ii)$\;\;$\Psi(m).x=\Psi(m.x)$,\\
$(iii)$\;\;$ x.\Psi(m)=\Psi(x.m)$.
\end{lemma}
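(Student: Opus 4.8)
The plan is to mirror the construction underlying Lemma \ref{ss}, simply replacing the single algebra $\mathfrak{A}$ by the pair $\mathfrak{A},\mathcal{B}$. Since $(\mathfrak{A}\widehat{\otimes}\mathcal{B})^{*}$ is the space $BL(\mathfrak{A}\times\mathcal{B})$ of bounded bilinear forms, I would first build a bounded bilinear map on $\mathfrak{A}^{**}\times\mathcal{B}^{**}$ valued in $(\mathfrak{A}\widehat{\otimes}\mathcal{B})^{**}=BL(\mathfrak{A}\times\mathcal{B})^{*}$ by iterated duality, and then invoke the universal property of the projective tensor norm to obtain $\Psi$. Concretely, for $m\in\mathfrak{A}^{**}$, $n\in\mathcal{B}^{**}$ and $T\in BL(\mathfrak{A}\times\mathcal{B})$, I observe that for each $b\in\mathcal{B}$ the assignment $a\mapsto T(a,b)$ lies in $\mathfrak{A}^{*}$, so that $b\mapsto\langle m,T(\cdot,b)\rangle$ defines an element $m\cdot T\in\mathcal{B}^{*}$ with $\Vert m\cdot T\Vert\leq\Vert m\Vert\,\Vert T\Vert$. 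I then set
\[
\langle\Psi_{0}(m,n),T\rangle:=\langle n,\,m\cdot T\rangle .
\]
This is bilinear in $(m,n)$ and obeys $|\langle\Psi_{0}(m,n),T\rangle|\leq\Vert m\Vert\,\Vert n\Vert\,\Vert T\Vert$, so $\Psi_{0}$ is a bounded bilinear map and induces a norm-decreasing linear map $\Psi$ on $\mathfrak{A}^{**}\widehat{\otimes}\mathcal{B}^{**}$, as required.

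Next I would verify $(i)$ by evaluating on elementary tensors. For $a\in\mathfrak{A}$, $b\in\mathcal{B}$ and arbitrary $T\in BL(\mathfrak{A}\times\mathcal{B})$, unwinding the definition gives $\langle\Psi(a\otimes b),T\rangle=\langle \hat{b},\hat{a}\cdot T\rangle=T(a,b)=\langle a\otimes b,T\rangle$, where $\hat{a},\hat{b}$ denote the canonical images of $a,b$ in $\mathfrak{A}^{**},\mathcal{B}^{**}$. Since $T$ was arbitrary, this yields $\Psi(a\otimes b)=a\otimes b$ inside $(\mathfrak{A}\widehat{\otimes}\mathcal{B})^{**}$.

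For the module identities $(ii)$ and $(iii)$ I would argue by density. Both sides of each identity are linear and continuous in $m$ and in $x$, and the elementary tensors $\mu\otimes\nu$ (with $\mu\in\mathfrak{A}^{**}$, $\nu\in\mathcal{B}^{**}$) and $c\otimes d$ (with $c\in\mathfrak{A}$, $d\in\mathcal{B}$) have dense linear span in $\mathfrak{A}^{**}\widehat{\otimes}\mathcal{B}^{**}$ and in $\mathfrak{A}\widehat{\otimes}\mathcal{B}$, respectively; hence it suffices to check $(ii)$ and $(iii)$ on such tensors. For $m=\mu\otimes\nu$ and $x=c\otimes d$ one has $m\cdot x=(\mu\cdot c)\otimes(\nu\cdot d)$, where $\mu\cdot c$ and $\nu\cdot d$ are the canonical right actions of $\mathfrak{A}$ on $\mathfrak{A}^{**}$ and of $\mathcal{B}$ on $\mathcal{B}^{**}$. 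I would then pair $\Psi(m\cdot x)$ and $\Psi(m)\cdot x$ against an arbitrary $T$ and reduce both to the same iterated expression, using the defining formula for $\Psi_{0}$ together with the formulas for the dual module actions; the identity $(iii)$ follows by the analogous computation with left actions in place of right actions.

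The main obstacle I anticipate is purely the bookkeeping in this last step: because $\mathfrak{A}$ and $\mathcal{B}$ need be neither unital nor commutative, one must fix a single order of iterated duality in the definition of $\Psi_{0}$ and then check that the dual module actions on $\mathfrak{A}^{**}$, on $\mathcal{B}^{**}$, and on $(\mathfrak{A}\widehat{\otimes}\mathcal{B})^{**}$ are transported consistently through $\Psi$. Everything else — boundedness, bilinearity, and the passage from elementary tensors to the whole space — is routine. This is exactly the two-algebra version of the computation cited for Lemma \ref{ss}, so I expect the verification in \cite{3} to adapt essentially verbatim.
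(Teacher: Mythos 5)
Your proposal is correct and is essentially the paper's own route: the paper proves Lemma \ref{sm} only by deferring (via Lemma \ref{ss}) to the construction in \cite{3}, and what you write out — the Arens-type iterated-duality extension $\langle\Psi(m\otimes n),T\rangle=\langle n,m\cdot T\rangle$ on $BL(\mathfrak{A}\times\mathcal{B})=(\mathfrak{A}\widehat{\otimes}\mathcal{B})^{*}$, followed by verification on elementary tensors and a density argument — is exactly that construction adapted to two algebras. Your checks of $(i)$--$(iii)$ go through with the componentwise module actions the paper uses, so the proof is complete.
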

\begin{proof}
The proof is similar to Lemma \ref{ss}.
\end{proof}

Remark: We consider
\begin{align*}
(\mathfrak{A}^{\sharp})^{**}&=(\mathfrak{A}\oplus\mathbb{C})^{**}\\
&=\mathfrak{A}^{**}\oplus\mathbb{C}^{**}\\
&=\mathfrak{A}^{**}\oplus\mathbb{C}=(\mathfrak{A}^{**})^{\sharp}
\end{align*}

\begin{theorem}
Let $\mathfrak{A}$ be a Banach algebra. If $\mathfrak{A}^{**}$ is approximately semi-amenable, then so is
 $\mathfrak{A}$.
\end{theorem}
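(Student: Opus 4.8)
The plan is to route everything through the diagonal-type characterisation of Theorem \ref{mm}, obtaining the required nets for the hypothesis algebra $\mathfrak{A}^{**}$ and then transporting them down to $\mathfrak{A}^{\sharp}$ by means of the map $\Psi$ of Lemma \ref{ss}. First I would put $\mathfrak{B}=\mathfrak{A}^{**}$ and recall from the Remark that $\mathfrak{B}^{\sharp}=(\mathfrak{A}^{**})^{\sharp}=(\mathfrak{A}^{\sharp})^{**}$. Since $\mathfrak{B}$ is approximately semi-amenable, Theorem \ref{mm} supplies nets $(M_{\alpha})$ and $(N_{\alpha})$ in $(\mathfrak{B}^{\sharp}\widehat{\otimes}\mathfrak{B}^{\sharp})^{**}=((\mathfrak{A}^{\sharp})^{**}\widehat{\otimes}(\mathfrak{A}^{\sharp})^{**})^{**}$ with $a.M_{\alpha}-N_{\alpha}.a\to 0$ in norm and $\pi^{**}_{\mathfrak{B}^{\sharp}}(M_{\alpha})=\pi^{**}_{\mathfrak{B}^{\sharp}}(N_{\alpha})=e_{(\mathfrak{B}^{\sharp})^{**}}$ for all $a\in\mathfrak{B}^{\sharp}$; in particular these hold for every $a$ in the subalgebra $\mathfrak{A}^{\sharp}\subseteq\mathfrak{B}^{\sharp}$. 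The aim is to produce from these the nets required by Theorem \ref{mm} for $\mathfrak{A}^{\sharp}$, living in $(\mathfrak{A}^{\sharp}\widehat{\otimes}\mathfrak{A}^{\sharp})^{**}$, and then invoke Proposition \ref{s.m}.

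Applying Lemma \ref{ss} with $\mathfrak{A}^{\sharp}$ in place of $\mathfrak{A}$ gives $\Psi:\mathfrak{B}^{\sharp}\widehat{\otimes}\mathfrak{B}^{\sharp}\to(\mathfrak{A}^{\sharp}\widehat{\otimes}\mathfrak{A}^{\sharp})^{**}$. Let $P:(\mathfrak{A}^{\sharp}\widehat{\otimes}\mathfrak{A}^{\sharp})^{****}\to(\mathfrak{A}^{\sharp}\widehat{\otimes}\mathfrak{A}^{\sharp})^{**}$ be the canonical (Dixmier) projection, namely the adjoint of the canonical embedding of $(\mathfrak{A}^{\sharp}\widehat{\otimes}\mathfrak{A}^{\sharp})^{*}$ into its bidual, so that $P$ restricts to the identity on the canonical image of $(\mathfrak{A}^{\sharp}\widehat{\otimes}\mathfrak{A}^{\sharp})^{**}$. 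I then define $\widetilde{M}_{\alpha}=P(\Psi^{**}(M_{\alpha}))$ and $\widetilde{N}_{\alpha}=P(\Psi^{**}(N_{\alpha}))$ in $(\mathfrak{A}^{\sharp}\widehat{\otimes}\mathfrak{A}^{\sharp})^{**}$, where $\Psi^{**}:(\mathfrak{B}^{\sharp}\widehat{\otimes}\mathfrak{B}^{\sharp})^{**}\to(\mathfrak{A}^{\sharp}\widehat{\otimes}\mathfrak{A}^{\sharp})^{****}$ is the second adjoint of $\Psi$. These $\widetilde{M}_{\alpha},\widetilde{N}_{\alpha}$ are the candidate nets for $\mathfrak{A}^{\sharp}$.

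To verify condition (ii) I would start from Lemma \ref{ss}(iv), which here reads $(\pi_{\mathfrak{A}^{\sharp}})^{**}\circ\Psi=\pi_{\mathfrak{B}^{\sharp}}$. Taking second adjoints and using functoriality of the bidual gives $(\pi_{\mathfrak{A}^{\sharp}})^{****}\circ\Psi^{**}=((\pi_{\mathfrak{A}^{\sharp}})^{**}\circ\Psi)^{**}=(\pi_{\mathfrak{B}^{\sharp}})^{**}$, while naturality of the Dixmier projection with respect to the operator $\pi_{\mathfrak{A}^{\sharp}}$ yields $\pi^{**}_{\mathfrak{A}^{\sharp}}\circ P=P'\circ(\pi_{\mathfrak{A}^{\sharp}})^{****}$, where $P':(\mathfrak{A}^{\sharp})^{****}\to(\mathfrak{A}^{\sharp})^{**}$ is the corresponding projection. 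Combining, $\pi^{**}_{\mathfrak{A}^{\sharp}}(\widetilde{M}_{\alpha})=P'((\pi_{\mathfrak{B}^{\sharp}})^{**}(M_{\alpha}))=P'(e_{(\mathfrak{B}^{\sharp})^{**}})$. Since $\mathfrak{A}^{\sharp}$ is unital, $e_{(\mathfrak{B}^{\sharp})^{**}}$ is the canonical image of the unit of $(\mathfrak{A}^{\sharp})^{**}$ and hence lies in the canonical copy on which $P'$ is the identity, so $P'(e_{(\mathfrak{B}^{\sharp})^{**}})=e_{(\mathfrak{A}^{\sharp})^{**}}$; the same computation applies to $\widetilde{N}_{\alpha}$, giving condition (ii).

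For condition (i) the point is that $P\circ\Psi^{**}$ is an $\mathfrak{A}^{\sharp}$-bimodule morphism: Lemma \ref{ss}(ii),(iii) say $\Psi$ intertwines the $\mathfrak{A}^{\sharp}$-actions, the second adjoint of a bimodule morphism over a fixed algebra is again one for the canonical module extensions, and $P$ commutes with the biadjoint of each multiplication operator $L_{a},R_{a}$ ($a\in\mathfrak{A}^{\sharp}$). Hence $a.\widetilde{M}_{\alpha}-\widetilde{N}_{\alpha}.a=P\Psi^{**}(a.M_{\alpha}-N_{\alpha}.a)$, and since $P\Psi^{**}$ is bounded and $a.M_{\alpha}-N_{\alpha}.a\to 0$ in norm, condition (i) follows; Theorem \ref{mm} then shows $\mathfrak{A}^{\sharp}$, hence $\mathfrak{A}$, is approximately semi-amenable. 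The step I expect to be the main obstacle is exactly this last compatibility: ensuring that the module action under which $P\circ\Psi^{**}$ is a morphism is precisely the one appearing in Theorem \ref{mm} for $\mathfrak{A}^{**}$, and not some other Arens extension. This requires checking that on $\mathfrak{B}^{\sharp}\widehat{\otimes}\mathfrak{B}^{\sharp}$ the restriction to $\mathfrak{A}^{\sharp}$ of the $\mathfrak{B}^{\sharp}$-structure coincides with the $\mathfrak{A}^{\sharp}$-structure of Lemma \ref{ss}, and that a fixed $a\in\mathfrak{A}^{\sharp}$ induces the same multiplication operator in either picture so that passing to the bidual introduces no discrepancy.
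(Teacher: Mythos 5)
Your proof is correct, and it follows the paper's overall skeleton --- Theorem \ref{mm} applied to $\mathfrak{A}^{**}$, the identification $(\mathfrak{A}^{**})^{\sharp}=(\mathfrak{A}^{\sharp})^{**}$ from the Remark, the map $\Psi$ of Lemma \ref{ss}, and then Theorem \ref{mm} (with Proposition \ref{s.m}) for $\mathfrak{A}$ --- but the way you transport the nets is genuinely different, and in fact cleaner than what the paper does. The paper descends to the tensor-product level: for each $\alpha$ it uses Goldstine's theorem to pick nets $(m_{\alpha}^{(\lambda)})_{\lambda}$ and $(n_{\alpha}^{(\lambda)})_{\lambda}$ in $(\mathfrak{A}^{\sharp})^{**}\widehat{\otimes}(\mathfrak{A}^{\sharp})^{**}$ converging weak$^{*}$ to $M_{\alpha}$, $N_{\alpha}$, applies $\Psi$ to those, and then writes $\Psi(am_{\alpha}^{(\lambda)}-n_{\alpha}^{(\lambda)}a)\rightarrow\Psi(a.M_{\alpha}-N_{\alpha}.a)\rightarrow\Psi(0)=0$. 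That step is delicate: $a.M_{\alpha}-N_{\alpha}.a$ does not even lie in the domain of $\Psi$, and $\Psi$, being merely norm continuous, need not commute with weak$^{*}$ limits, so the paper's argument really requires an iterated-limit argument of the kind used in the converse half of Theorem \ref{mm}. Your composite $P\circ\Psi^{**}$ is precisely the canonical weak$^{*}$-to-weak$^{*}$ continuous extension of $\Psi$ to the bidual of its domain (both $P$ and $\Psi^{**}$ are adjoint operators), so applying it directly to $M_{\alpha}$, $N_{\alpha}$ bypasses Goldstine entirely and dissolves the limit-interchange issue: condition (i) follows from boundedness (indeed weak$^{*}$-continuity, so even weak$^{*}$ convergence of $a.M_{\alpha}-N_{\alpha}.a$ would suffice) of $P\circ\Psi^{**}$, and condition (ii) from the two naturality identities you invoke, each of which is an instance of $S^{**}\circ\kappa_{Y}=\kappa_{Z}\circ S$. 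The compatibility worry you flag at the end is legitimate but harmless: the $\mathfrak{A}^{\sharp}$-action on $(\mathfrak{A}^{\sharp})^{**}\widehat{\otimes}(\mathfrak{A}^{\sharp})^{**}$ in Lemma \ref{ss} is multiplication by $\kappa(a)$ in the Arens product, and since the two Arens products agree when one factor lies in the canonical image of $\mathfrak{A}^{\sharp}$, the restriction to $\kappa(\mathfrak{A}^{\sharp})$ of the module structure used in Theorem \ref{mm} for $\mathfrak{A}^{**}$ is unambiguous and is exactly the action that $\Psi$ intertwines; hence for fixed $a$ the multiplication operators whose biadjoints you take are the same in both pictures, as your argument needs.
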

\begin{proof}
Suppose that $\mathfrak{A}^{**}$ is approximately semi-amenable. Thus by Theorem \ref{mm} there
  are nets $(M_{\alpha})$ and $(N_{\alpha})$ in $((\mathfrak{A}^{**})^{\sharp}\widehat{\otimes
 }(\mathfrak{A}^{**})^{\sharp})^{**}=((\mathfrak{A}^{\sharp})^{**}\widehat{\otimes
 }(\mathfrak{A}^{\sharp})^{**})^{**}$ such that for each
  $F\in(\mathfrak{A}^{**})^{\sharp}=(\mathfrak{A}^{\sharp})^{**}$
 \begin{align*}
 (i)\;\; &F.M_{\alpha}-N_{\alpha}.F\longrightarrow 0\\
 (ii)\;\; &\pi^{**}_{(\mathfrak{A}^{\sharp})^{**}}(M_{\alpha})=\pi^{**}_{(\mathfrak{A}^{\sharp})^{**}}(N_{\alpha})=e
 ,\;\; (\alpha\in\Lambda).
 \end{align*}
 For each $\alpha\in\Lambda$ there are nets $(m_{\alpha}^{(\lambda)})$ and $(n_{\alpha}^{(\lambda)})$ in
  $((\mathfrak{A})^{\sharp})^{**}\widehat{\otimes}((\mathfrak{A})^{\sharp})^{**}$ such that
   $$m_{\alpha}^{(\lambda)}\longrightarrow M_{\alpha},\;\;n_{\alpha}^{(\lambda)}\longrightarrow
    N_{\alpha}$$ in $w^{*}$-topology. Therefore, by Lemma \ref{ss} for continuous linear mapping $\Psi
 :((\mathfrak{A})^{\sharp})^{**}\widehat{\otimes}((\mathfrak{A})^{\sharp})^{**}\longrightarrow
  (\mathfrak{A}^{\sharp}\widehat{\otimes}\mathfrak{A}^{\sharp})^{**}$, the nets
   $(M''_{\alpha,\lambda})=(\Psi(m_{\alpha}^{(\lambda)}))_{\lambda}$ and
    $(N''_{\alpha,\lambda})=(\Psi(n_{\alpha}^{(\lambda)}))_{\lambda}$ are belong to
     $(\mathfrak{A}^{\sharp}\widehat{\otimes}\mathfrak{A}^{\sharp})^{**}$ and for each
      $a\in\mathfrak{A}^{\sharp}$, we have
 \begin{align*}
(i)\;\; a.M''_{\alpha,\lambda}-N''_{\alpha,\lambda}.a&=a
.\Psi(m_{\alpha}^{(\lambda)})-\Psi(n_{\alpha}^{(\lambda)}).a\\
 &=\Psi(am_{\alpha}^{(\lambda)})-\Psi(n_{\alpha}^{(\lambda)}a)\\
 &=\Psi(am_{\alpha}^{(\lambda)}-n_{\alpha}^{(\lambda)}a)\\
 &\rightarrow\Psi(a.M_{\alpha}-N_{\alpha}.a)\\
 &\rightarrow\Psi(0)=0\\
 (ii)\;\; \pi^{**}_{\mathfrak{A}^{\sharp}}(M''_{\alpha,\lambda})&=\pi^{**}_{\mathfrak{A}^{\sharp}
 }(\Psi(m_{\alpha}^{(\lambda)}))\\
 &=\pi_{(\mathfrak{A}^{\sharp})^{**}}(m_{\alpha}^{(\lambda)})\\
 &\rightarrow\pi_{(\mathfrak{A}^{\sharp})^{**}}(M_{\alpha})=e\\
 \pi^{**}_{\mathfrak{A}^{\sharp}}(N''_{\alpha,\lambda})&=\pi^{**}_{\mathfrak{A}^{\sharp}}
 (\Psi(n_{\alpha}^{(\lambda)}))\\
 &=\pi_{(\mathfrak{A}^{\sharp})^{**}}(n_{\alpha}^{(\lambda)})\\
 &\rightarrow\pi_{(\mathfrak{A}^{\sharp})^{**}}(N_{\alpha})=e
 \end{align*}
  Consequently, $\mathfrak{A}$ is approximately semi-amenable.
\end{proof}

    \begin{proposition}
    Let $\mathfrak{A}$ be approximately semi-amenable and $\varphi:\mathfrak{A}\rightarrow\mathcal{B}$ be a
     continuous epimorphism then $\mathcal{B}$ is approximately semi-amenable.
    \end{proposition}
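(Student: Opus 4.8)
The plan is to pull the given module and derivation back along $\varphi$, apply approximate semi-amenability of $\mathfrak{A}$, and then transfer the resulting nets forward using surjectivity of $\varphi$. First I would fix an arbitrary Banach $\mathcal{B}$-bimodule $\mathcal{X}$ and a continuous derivation $\mathcal{D}\in\mathcal{Z}^{1}(\mathcal{B},\mathcal{X}^{*})$; the goal is to show that $\mathcal{D}$ is approximately semi-inner. Using the continuous homomorphism $\varphi$, I would turn $\mathcal{X}$ into a Banach $\mathfrak{A}$-bimodule by setting $a.x:=\varphi(a).x$ and $x.a:=x.\varphi(a)$ for $a\in\mathfrak{A}$ and $x\in\mathcal{X}$. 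Boundedness of $\varphi$ guarantees these actions are bounded, so $\mathcal{X}^{*}$ becomes a dual Banach $\mathfrak{A}$-bimodule with the corresponding adjoint actions.

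Next I would consider the composition $\mathcal{D}\circ\varphi:\mathfrak{A}\rightarrow\mathcal{X}^{*}$. Since $\varphi$ is a homomorphism and $\mathcal{D}$ is a derivation, for $a,b\in\mathfrak{A}$ one computes $(\mathcal{D}\circ\varphi)(ab)=\mathcal{D}(\varphi(a)\varphi(b))=\mathcal{D}(\varphi(a)).\varphi(b)+\varphi(a).\mathcal{D}(\varphi(b))=(\mathcal{D}\circ\varphi)(a).b+a.(\mathcal{D}\circ\varphi)(b)$, where the last equality uses the pulled-back module structure; hence $\mathcal{D}\circ\varphi\in\mathcal{Z}^{1}(\mathfrak{A},\mathcal{X}^{*})$, and it is continuous as a composition of continuous maps. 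Because $\mathfrak{A}$ is approximately semi-amenable, this derivation is approximately semi-inner: there are nets $(\xi_{\alpha})$ and $(\mu_{\alpha})$ in $\mathcal{X}^{*}$ with $(\mathcal{D}\circ\varphi)(a)=\lim_{\alpha}(a.\xi_{\alpha}-\mu_{\alpha}.a)=\lim_{\alpha}(\varphi(a).\xi_{\alpha}-\mu_{\alpha}.\varphi(a))$ for every $a\in\mathfrak{A}$, the last step being the definition of the pulled-back actions.

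Finally I would invoke surjectivity of $\varphi$: given any $b\in\mathcal{B}$, choose $a\in\mathfrak{A}$ with $\varphi(a)=b$, so that $\mathcal{D}(b)=(\mathcal{D}\circ\varphi)(a)=\lim_{\alpha}(b.\xi_{\alpha}-\mu_{\alpha}.b)$. Thus the nets $(\xi_{\alpha})$ and $(\mu_{\alpha})$ witness that $\mathcal{D}$ is approximately semi-inner, and since $\mathcal{X}$ and $\mathcal{D}$ were arbitrary, $\mathcal{B}$ is approximately semi-amenable.

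The only place where $\varphi$ being onto (rather than merely of dense range) is essential is this last step: every $b\in\mathcal{B}$ must be hit exactly by some $a$, so that the very same nets transfer with no interchange-of-limits issue. I therefore expect this to be the main obstacle. If one only assumed $\overline{\varphi(\mathfrak{A})}=\mathcal{B}$, the difficulty would be to promote the identity from the dense subalgebra $\varphi(\mathfrak{A})$ to all of $\mathcal{B}$, which would require simultaneously controlling the net index $\alpha$ and an approximating sequence $\varphi(a_{n})\to b$; the direct pullback argument sketched here sidesteps that difficulty entirely by using surjectivity.
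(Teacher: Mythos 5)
Your argument is correct: the pullback of the module structure along $\varphi$, the verification that $\mathcal{D}\circ\varphi$ is a derivation for that structure, and the transfer of the witnessing nets via surjectivity are all sound, and you rightly note that the same nets $(\xi_{\alpha})$, $(\mu_{\alpha})$ work for every $b\in\mathcal{B}$ with no limit-interchange issue. The paper dismisses this proposition with ``the proof is straightforward,'' and your proof is precisely the standard argument it has in mind, so there is nothing to compare beyond confirming you filled in the omitted details correctly.
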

    \begin{proof}
  The proof is straightforward.
    \end{proof}

     \begin{corollary}
    Suppoe that $\mathfrak{A}$ is approximately semi-amenable, and $\mathcal{J}$ is a closed two-sided ideal of
     $\mathfrak{A}$. Then
  the quotient Banach algebra $\mathfrak{A}/\mathcal{J}$ is approximately semi-amenable.
    \end{corollary}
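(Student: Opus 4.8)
The plan is to deduce this directly from the preceding Proposition by recognizing the quotient map as a continuous epimorphism. Since $\mathcal{J}$ is a closed two-sided ideal of $\mathfrak{A}$, the quotient space $\mathfrak{A}/\mathcal{J}$ carries a natural Banach algebra structure under the quotient norm, and the canonical projection
\[
q:\mathfrak{A}\longrightarrow\mathfrak{A}/\mathcal{J},\qquad q(a)=a+\mathcal{J},
\]
is a well-defined algebra homomorphism, which will be the map I feed into the Proposition.

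First I would verify that $q$ meets the hypotheses required there: it is surjective by the very construction of the quotient, and it is norm-decreasing, $\Vert q(a)\Vert\leq\Vert a\Vert$, hence continuous. Consequently $q$ is a continuous epimorphism from $\mathfrak{A}$ onto $\mathcal{B}:=\mathfrak{A}/\mathcal{J}$, exactly the situation covered by the preceding Proposition.

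Finally, since $\mathfrak{A}$ is approximately semi-amenable by assumption, applying the preceding Proposition with $\varphi=q$ yields immediately that $\mathfrak{A}/\mathcal{J}$ is approximately semi-amenable. I expect no genuine obstacle in this argument; the whole content is the single observation that the canonical quotient map is a continuous epimorphism, after which approximate semi-amenability is inherited by the transfer principle along continuous epimorphisms already established above.
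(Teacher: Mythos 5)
Your argument is correct and is exactly the intended one: the paper states this corollary immediately after the proposition on continuous epimorphisms precisely because the canonical quotient map $q:\mathfrak{A}\rightarrow\mathfrak{A}/\mathcal{J}$ is a continuous (indeed norm-decreasing) surjective algebra homomorphism, so the proposition applies directly. Nothing further is needed.
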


With notice to page 15 from \cite{12}, approximate amenability of $\mathfrak{A}\oplus\mathfrak{A}$ still is open question whenever
 $\mathfrak{A}$ so is. In the following we solve the problem for approximate semi-amenability in general case.
 
 \begin{theorem} $\label{2.9}$
Let $\mathfrak{A}$ and $\mathcal{B}$ be Banach algebras. Then $\mathfrak{A}\oplus \mathcal{B}$ is 
 approximately semi-contractible if and only if $\mathfrak{A}$ and $\mathcal{B}$
 are  approximately semi-contractible.  
\end{theorem}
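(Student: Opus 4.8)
The plan is to prove the two implications separately, phrasing everything through the diagonal-type characterisation of Theorem \ref{aa}, exactly as in the proof of Theorem \ref{ee}. Throughout I write $\mathfrak{C}=\mathfrak{A}\oplus\mathcal{B}$ for the $\ell_{1}$-direct sum and use the two unital homomorphisms $\iota_{\mathfrak{A}}\colon\mathfrak{A}^{\sharp}\to\mathfrak{C}^{\sharp}$, $\iota_{\mathfrak{A}}(a,\lambda)=(a,0,\lambda)$, and $\iota_{\mathcal{B}}\colon\mathcal{B}^{\sharp}\to\mathfrak{C}^{\sharp}$, $\iota_{\mathcal{B}}(b,\lambda)=(0,b,\lambda)$; a direct check shows each is an algebra homomorphism carrying the respective unit to the unit $e$ of $\mathfrak{C}^{\sharp}$. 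The structural fact I will lean on is that $\mathfrak{A}$ and $\mathcal{B}$ annihilate one another in $\mathfrak{C}$, so that $j_{\mathfrak{A}}(a)\,j_{\mathcal{B}}(b)=0$ for the canonical images $j_{\mathfrak{A}}(a)=(a,0,0)$, $j_{\mathcal{B}}(b)=(0,b,0)$. For the ``only if'' direction I would argue by heredity: the coordinate projections $p_{\mathfrak{A}}\colon\mathfrak{C}\to\mathfrak{A}$ and $p_{\mathcal{B}}\colon\mathfrak{C}\to\mathcal{B}$ are continuous epimorphisms, so combining the Proposition on epimorphic images with the already-established equivalence of approximate semi-amenability and approximate semi-contractibility, the hypothesis that $\mathfrak{C}$ is approximately semi-contractible forces both $\mathfrak{A}$ and $\mathcal{B}$ to be so. (Alternatively one checks directly that an approximately semi-inner derivation on $\mathfrak{C}$ restricts and descends through $p_{\mathfrak{A}}$, $p_{\mathcal{B}}$.)

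For the substantive ``if'' direction I would start from nets $(M^{\mathfrak{A}}_{\alpha}),(N^{\mathfrak{A}}_{\alpha})$ in $\mathfrak{A}^{\sharp}\widehat{\otimes}\mathfrak{A}^{\sharp}$ and $(M^{\mathcal{B}}_{\beta}),(N^{\mathcal{B}}_{\beta})$ in $\mathcal{B}^{\sharp}\widehat{\otimes}\mathcal{B}^{\sharp}$ supplied by Theorem \ref{aa}, transport them to $\mathfrak{C}^{\sharp}\widehat{\otimes}\mathfrak{C}^{\sharp}$ by $\iota_{\mathfrak{A}}\otimes\iota_{\mathfrak{A}}$ and $\iota_{\mathcal{B}}\otimes\iota_{\mathcal{B}}$, and try the candidate
\[
M_{\gamma}=(\iota_{\mathfrak{A}}\otimes\iota_{\mathfrak{A}})(M^{\mathfrak{A}}_{\alpha})+(\iota_{\mathcal{B}}\otimes\iota_{\mathcal{B}})(M^{\mathcal{B}}_{\beta})-e\otimes e,
\]
together with the analogous $N_{\gamma}$, indexed by $\gamma=(\alpha,\beta)$ in the product directed set. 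Because each $\iota$ is a unital homomorphism, $\pi_{\mathfrak{C}^{\sharp}}\circ(\iota\otimes\iota)=\iota\circ\pi$, so conditions (ii),(iii) follow at once from $\pi(M^{\mathfrak{A}}_{\alpha})\to e$, $\pi(M^{\mathcal{B}}_{\beta})\to e$ and $\pi(e\otimes e)=e$. The case $z=e$ of (i) reduces to $M^{\mathfrak{A}}_{\alpha}-N^{\mathfrak{A}}_{\alpha}\to0$ and $M^{\mathcal{B}}_{\beta}-N^{\mathcal{B}}_{\beta}\to0$, which are the $z=e$ instances of (i) for $\mathfrak{A}$ and $\mathcal{B}$; and for $z=j_{\mathfrak{A}}(a)$ the ``own-algebra'' summand is precisely $(\iota_{\mathfrak{A}}\otimes\iota_{\mathfrak{A}})$ applied to $(a,0).M^{\mathfrak{A}}_{\alpha}-N^{\mathfrak{A}}_{\alpha}.(a,0)\to0$.

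The main obstacle is the \emph{cross terms}: the action of the $\mathcal{B}$-part of $z$ on the transported $\mathfrak{A}$-diagonal, and vice versa. Using $\mathfrak{A}\mathcal{B}=0$ one finds $j_{\mathcal{B}}(b).(\iota_{\mathfrak{A}}\otimes\iota_{\mathfrak{A}})(M^{\mathfrak{A}}_{\alpha})=j_{\mathcal{B}}(b)\otimes\iota_{\mathfrak{A}}\big((\epsilon\otimes\mathrm{id})M^{\mathfrak{A}}_{\alpha}\big)$, where $\epsilon$ is the augmentation character of $\mathfrak{A}^{\sharp}$, with a mirror formula $\iota_{\mathfrak{A}}\big((\mathrm{id}\otimes\epsilon)N^{\mathfrak{A}}_{\alpha}\big)\otimes j_{\mathcal{B}}(b)$ on the $N$-side. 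Slicing condition (i) for $\mathfrak{A}$ by $\epsilon\otimes\mathrm{id}$ shows that $(\epsilon\otimes\mathrm{id})M^{\mathfrak{A}}_{\alpha}$ does \emph{not} tend to the unit of $\mathfrak{A}^{\sharp}$ — its augmentation-ideal part is (minus) a left approximate identity of $\mathfrak{A}$. Consequently the naive $-e\otimes e$ correction leaves a residual living in the mixed summands $\mathfrak{A}\widehat{\otimes}\mathcal{B}$ and $\mathcal{B}\widehat{\otimes}\mathfrak{A}$, and the symmetric candidate above fails (i) for $z=j_{\mathcal{B}}(b)$.

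The remedy I would pursue is to absorb these residuals by additional correction nets built from approximate identities $(e^{\mathfrak{A}}_{i})$, $(e^{\mathcal{B}}_{j})$, which exist by the Lemma guaranteeing an approximate identity. The point is that inserting $j_{\mathfrak{A}}(e^{\mathfrak{A}}_{i})$ into the $\mathfrak{A}$-slot of a mixed correction makes that term annihilated by $j_{\mathcal{B}}(b)$ while $j_{\mathfrak{A}}(e^{\mathfrak{A}}_{i})\,j_{\mathfrak{A}}(a)=j_{\mathfrak{A}}(e^{\mathfrak{A}}_{i}a)\to j_{\mathfrak{A}}(a)$ reproduces exactly the missing factor for $z=j_{\mathfrak{A}}(a)$; since $j_{\mathfrak{A}}(\cdot)\,j_{\mathcal{B}}(\cdot)=0$, every such correction carries no $\pi$-mass and is invisible to the ``wrong'' variable. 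Reconciling these localised corrections with the $z=e$ requirement $M_{\gamma}-N_{\gamma}\to0$ — which forces the various mixed-summand pieces to cancel in the limit, and hence forces a simultaneous, compatible choice of the approximate-identity indices and of the slices $(\epsilon\otimes\mathrm{id})M^{\mathfrak{A}}_{\alpha}$, $(\mathrm{id}\otimes\epsilon)N^{\mathfrak{A}}_{\alpha}$ and their $\mathcal{B}$-analogues — is the delicate heart of the argument and the step I expect to demand the most care. Once $(M_{\gamma}),(N_{\gamma})$ are produced, Theorem \ref{aa} yields approximate semi-contractibility of $\mathfrak{C}$, completing the equivalence.
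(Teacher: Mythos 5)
Your ``only if'' direction is fine, and is a legitimately different route from the paper's: you compose the coordinate projections (continuous epimorphisms) with the Proposition on epimorphic images and the already-noted equivalence of approximate semi-amenability and approximate semi-contractibility, whereas the paper argues directly by assembling the $\mathfrak{A}\oplus\mathcal{B}$-bimodule $\mathcal{X}\oplus\mathcal{Y}$ and the derivation $(a,b)\mapsto(\mathcal{D}_{1}(a),\mathcal{D}_{2}(b))$. Either works.

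The ``if'' direction, however, contains a genuine gap, and you flag it yourself: after correctly diagnosing that the naive candidate $M_{\gamma}=(\iota_{\mathfrak{A}}\otimes\iota_{\mathfrak{A}})(M^{\mathfrak{A}}_{\alpha})+(\iota_{\mathcal{B}}\otimes\iota_{\mathcal{B}})(M^{\mathcal{B}}_{\beta})-e\otimes e$ fails condition (i) for $z=j_{\mathcal{B}}(b)$ because of the cross terms, you only sketch a correction scheme and then declare its reconciliation with the $z=e$ requirement to be ``the delicate heart of the argument,'' left unexecuted. An identified obstruction plus a hoped-for remedy is not a proof. The idea you are missing is that the cross terms are absorbed \emph{trivially} if one works at the level of derivations and modules rather than at the level of tensor diagonals, and this is exactly what the paper does: given a derivation $\mathcal{D}:\mathfrak{A}\oplus\mathcal{B}\rightarrow\mathcal{X}$, restrict to $\mathcal{D}_{1}(a)=\mathcal{D}(a,0)$ and $\mathcal{D}_{2}(b)=\mathcal{D}(0,b)$, write by hypothesis
\begin{align*}
\mathcal{D}_{1}(a)&=\lim_{\alpha}\bigl[(a,0)\xi_{\alpha}-\zeta_{\alpha}(a,0)\bigr],\qquad
\mathcal{D}_{2}(b)=\lim_{\alpha}\bigl[(0,b)\mu_{\alpha}-\eta_{\alpha}(0,b)\bigr],
\end{align*}
and then use the approximate identities guaranteed by the Lemma to replace $(a,0)$ by $\lim_{\alpha}(a,b)(e_{\alpha},0)$ on the left and by $\lim_{\alpha}(f_{\alpha},0)(a,b)$ on the right (similarly for $(0,b)$). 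The implementing nets become $\psi_{\alpha}=(e_{\alpha},0)\xi_{\alpha}+(0,e'_{\alpha})\mu_{\alpha}$ and $\varphi_{\alpha}=\zeta_{\alpha}(f_{\alpha},0)+\eta_{\alpha}(0,f'_{\alpha})$, and the cross terms you struggled with never arise, because $(a,b)(e_{\alpha},0)=(ae_{\alpha},0)$ kills the $\mathcal{B}$-component automatically --- this is precisely the absorption-by-approximate-identity you were trying to engineer, but performed where it costs nothing and needs no normalization conditions on $\pi$. As written, your argument for this direction is incomplete; either carry out your diagonal-level correction in full (which you have not done, and which is genuinely delicate) or switch to the module-level argument above.
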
 
\begin{proof}
Let $\mathcal{X}$ and  $\mathcal{Y}$  be respectively, Banach $\mathfrak{A}$-bimodule and Banach
 $\mathcal{B}$-bimodule and $\mathcal{D}_{1}\in \mathcal{Z}^{1}(\mathfrak{A},\mathcal{X})$,
  $\mathcal{D}_{2}\in\mathcal{Z}^{1}(\mathcal{B},\mathcal{Y})$ be continuous. Then $\mathcal{X}\oplus
   \mathcal{Y}$ is a Banach $\mathfrak{A}\oplus \mathcal{B}$-bimodule, by following module actions
\begin{align*}
&(a,b).(x,y):=(ax,by),\\
&(x,y).(a,b):=(xa,yb),
\end{align*}
for $a\in \mathfrak{A}$ , $b\in \mathcal{B}$ , $x\in \mathcal{X}$ and $y\in \mathcal{Y}$. Clearly, the mapping
 $\mathcal{D}^{\sim}:\mathfrak{A}\oplus \mathcal{B} \rightarrow \mathcal{X}\oplus \mathcal{Y}$ specified by
  $\mathcal{D}^{\sim} (a,b):=(\mathcal{D}_{1}(a),\mathcal{D}_{2}(b))$ is a continuous derivation. So by
 assumption there are nets $\lbrace(x_{\alpha} , y_{\alpha})\rbrace_{\alpha}$ and $\lbrace(x'_{\alpha} ,
  y'_{\alpha})\rbrace_{\alpha}$ in
 $X\oplus \mathcal{Y}$ , ( $ x_{\alpha}, x'_{\alpha} \in \mathcal{X}$, and $ y_{\alpha},
  y'_{\alpha} \in\mathcal{Y}$)  such that
 \begin{align*}
\;(\mathcal{D}_{1}(a) , \mathcal{D}_{2}(b)) &=\mathcal{D}^{\sim}(a,b)\\
 &=\lim_{\alpha}((a,b).(x_{\alpha} , y_{\alpha})-(x'_{\alpha}, y'_{\alpha}).(a,b))\\
&=\lim_{\alpha}(ax_{\alpha} - x'_{\alpha}a\,\,  ,\,\,  by_{\alpha} - y'_{\alpha}b). 
\end{align*}
Therefore, $\mathcal{D}_{1}$ and $\mathcal{D}_{2}$ are approximately semi-inner.
 Consequently, $\mathfrak{A}$ and $\mathcal{B}$ are approximately semi-contractible.\\ 
Conversely, let $\mathcal{X}$ be a Banach $\mathfrak{A}\oplus \mathcal{B}$-bimodule and $\mathcal{D}
:\mathfrak{A}\oplus \mathcal{B}\rightarrow \mathcal{X}$ be a continuous derivation. Then $\mathcal{X}$ is a
 Banach $\mathfrak{A}$
 (and $\mathcal{B}$)-bimodule by following module actions,
\begin{align*}
&a.x:=(a,0)x, \;\;\;x.a:=x(a,0), \\
&(\;\;b.x:=(0,b)x, \;\;\; x.b:=x(0,b),)
\end{align*}
for each $a\in \mathfrak{A}$ , $b\in \mathcal{B}$ and $x\in \mathcal{X}$.
Clearly, $\mathcal{D}$ induces two continuous derivations as follows
\begin{align*}
&\mathcal{D}_{1}:\mathfrak{A}\rightarrow \mathcal{X},\;\;\; \mathcal{D}_{1}(a)=\mathcal{D}(a,0), \\
&\mathcal{D}_{2}:\mathcal{B}\rightarrow \mathcal{X}, \;\;\; \mathcal{D}_{2}(b)=\mathcal{D}(0,b).
\end{align*}
So there are nets $(\zeta_{\alpha})_{\alpha}$, $(\xi_{\alpha})_{\alpha}$, $(\mu_{\alpha})_{\alpha}$ and
 $(\eta_{\alpha})_{\alpha}$ in $\mathcal{X}$ such that

\begin{align*}
\;\;\mathcal{D}_{1}(a)&=\lim_{\alpha}[a.\xi_{\alpha}-\zeta_{\alpha}.a]=\lim_{\alpha}[(a,0)
\xi_{\alpha}-\zeta_{\alpha}(a,0)] \\
\mathcal{D}_{2}(b)&=\lim_{\alpha}[b.\mu_{\alpha}-\eta_{\alpha}.b]=\lim_{\alpha}[(0,b)
\mu_{\alpha}-\eta_{\alpha}(0,b)] 
\end{align*}
for each $a\in \mathfrak{A}$ and $b\in \mathcal{B}$ .
\begin{equation}
\mathcal{D}(a,b) =\lim_{\alpha}[(a,0)\xi_{\alpha}-\zeta_{\alpha}(a,0)]+
\lim_{\alpha}[(0,b)\mu_{\alpha}-\eta_{\alpha}(0,b)]. \label{4.1}
\end{equation}

According to approximate semi-contractibility of $\mathfrak{A}$ and $\mathcal{B}$, they have A.I.
 Let $(e_{\alpha})_{\alpha}$ and $(f_{\alpha})_{\alpha}$ be right and left A.I for $\mathfrak{A}$, respectively.
 Also  $(e'_{\alpha})_{\alpha}$ and $(f'_{\alpha})_{\alpha}$ be right and left A.I for $\mathcal{B}$,
  respectively. ( Without lose of generality we used common index set, for all nets ). Then for each $(a,b)\in \mathfrak{A}\oplus
   \mathcal{B}$, we have
\begin{align*}
(a,0) &=\lim_{\alpha}(a,b).(e_{\alpha},0)=\lim_{\alpha}(f_{\alpha},0).(a,b),\\
(0,b) &=\lim_{\alpha}(a,b).(0,e'_{\alpha})=\lim_{\alpha}(0,f'_{\alpha}).(a,b). 
\end{align*}
 
Now by above equation we have 
\begin{align*}
\;\mathcal{D}(a,b)&=\lim_{\alpha} \left[ (a,b). ((e_{\alpha},0)\xi_{\alpha}+(0,e'_{\alpha})
\mu_{\alpha})-(\zeta_{\alpha}(f_{\alpha},0)+\eta_{\alpha}(0,f'_{\alpha})).(a,b)\right].
\end{align*}
Letting,
\begin{align*}
\; (\psi_{\alpha})_{\alpha}&=\lbrace(e_{\alpha},0)\xi_{\alpha}+(0,e'_{\alpha})
\mu_{\alpha}\rbrace_{\alpha}\\
 (\varphi_{\alpha})_{\alpha}&= \lbrace \zeta_{\alpha}
(f_{\alpha},0)+\eta_{\alpha}(0,f'_{\alpha})\rbrace _{\alpha}.
\end{align*}
  Thus the nets $(\psi_{\alpha})$ and $(\mu_{\alpha})$ are belong to $\mathcal{X}$ and  
\begin{align*}
\;\mathcal{D}(a,b)&=\lim_{\alpha}\left[(a,b).\psi_{\alpha}-\varphi_{\alpha}.(a,b)\right].  
\end{align*} 
So $\mathcal{D}$ is approximately semi-inner and consequently, $\mathfrak{A}\oplus
 \mathcal{B}$ is approximately semi-contractible.
\end{proof}

\begin{example}
Let $l^{p}$ and $l^{q}$, $(1\leq p,q<\infty)$ be Banach sequence algebras. By discussion at the begining of this
 section both $l^{p}$ and $l^{q}$ are approximately semi-amenable, then by Theorem \ref{2.9}, $l^{p}\oplus
  l^{q}$, and generally, $\oplus_{i=1}^{n} l^{p}$ are approximately semi-amenable which never approximately
   amenable.
\end{example}
 In the following theorem we use some techniques, from argument of Theorem 3.3 in \cite{2}.
     
\begin{theorem}\label{ff} 
Let $\mathfrak{A}$ and $\mathcal{B}$ be Banach algebras. Then $\mathfrak{A}\widehat{\otimes}\mathcal{B}$ is
 approximately semi-contractible, if and only if  $\mathfrak{A}$ and $\mathcal{B}$ are approximately 
 semi-contractible. 
\end{theorem}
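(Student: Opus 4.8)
The plan is to route both implications through the semi-diagonal characterization of Theorem~\ref{aa}, transporting the two nets across the projective tensor product by means of the canonical interchange isomorphism
\[
\Theta:\mathfrak{A}^{\sharp}\widehat{\otimes}\mathfrak{A}^{\sharp}\widehat{\otimes}\mathcal{B}^{\sharp}\widehat{\otimes}\mathcal{B}^{\sharp}\longrightarrow(\mathfrak{A}^{\sharp}\widehat{\otimes}\mathcal{B}^{\sharp})\widehat{\otimes}(\mathfrak{A}^{\sharp}\widehat{\otimes}\mathcal{B}^{\sharp}),\qquad a_{1}\otimes a_{2}\otimes b_{1}\otimes b_{2}\longmapsto(a_{1}\otimes b_{1})\otimes(a_{2}\otimes b_{2}),
\]
which is an isometric isomorphism that intertwines the product maps: applying the product of $\mathfrak{A}^{\sharp}\widehat{\otimes}\mathcal{B}^{\sharp}$ to $\Theta(\,\cdot\,)$ gives $a_{1}a_{2}\otimes b_{1}b_{2}=\pi_{\mathfrak{A}^{\sharp}}(a_{1}\otimes a_{2})\otimes\pi_{\mathcal{B}^{\sharp}}(b_{1}\otimes b_{2})$, so $\pi$ factors as $\pi_{\mathfrak{A}^{\sharp}}\widehat{\otimes}\pi_{\mathcal{B}^{\sharp}}$ after $\Theta$.

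For the implication that $\mathfrak{A}$ and $\mathcal{B}$ being approximately semi-contractible force $\mathfrak{A}\widehat{\otimes}\mathcal{B}$ to be so, I would first fix, via Theorem~\ref{aa}, nets $(M_{\alpha}),(N_{\alpha})$ in $\mathfrak{A}^{\sharp}\widehat{\otimes}\mathfrak{A}^{\sharp}$ and $(P_{\beta}),(Q_{\beta})$ in $\mathcal{B}^{\sharp}\widehat{\otimes}\mathcal{B}^{\sharp}$ satisfying (i)--(iii). I would then form the combined nets $\Theta(M_{\alpha}\otimes P_{\beta})$ and $\Theta(N_{\alpha}\otimes Q_{\beta})$, indexed by the product directed set, and push them into $(\mathfrak{A}\widehat{\otimes}\mathcal{B})^{\sharp}\widehat{\otimes}(\mathfrak{A}\widehat{\otimes}\mathcal{B})^{\sharp}$, keeping the adjoined units on opposite tensor slots exactly as in the proof of Theorem~\ref{ee}, where $M$ carries $e$ on the right factor and $N$ carries $e$ on the left. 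Conditions (ii) and (iii) should then follow immediately, since under $\Theta$ the product of the images tends to $\pi_{\mathfrak{A}^{\sharp}}(M_{\alpha})\otimes\pi_{\mathcal{B}^{\sharp}}(P_{\beta})\to e_{\mathfrak{A}^{\sharp}}\otimes e_{\mathcal{B}^{\sharp}}$, the identity of $(\mathfrak{A}\widehat{\otimes}\mathcal{B})^{\sharp}$; and the module condition (i) should reduce, after expanding $(a\otimes b).(\,\cdot\,)-(\,\cdot\,).(a\otimes b)$, to the two given relations $a.M_{\alpha}-N_{\alpha}.a\to0$ and $b.P_{\beta}-Q_{\beta}.b\to0$ combined with $\pi_{\mathfrak{A}^{\sharp}}(M_{\alpha}),\pi_{\mathcal{B}^{\sharp}}(P_{\beta})\to e$.

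For the reverse implication I would start from semi-diagonals $(\mathcal{M}_{\gamma}),(\mathcal{N}_{\gamma})$ for $\mathfrak{A}\widehat{\otimes}\mathcal{B}$ and recover those of each factor. Since $\mathfrak{A}\widehat{\otimes}\mathcal{B}$ is approximately semi-contractible it has an approximate identity (by the Lemma following Proposition~\ref{s.m}, using the equivalence of approximate semi-contractibility and approximate semi-amenability noted after Theorem~\ref{aa}), and this forces approximate identities on the factors; I would then use an approximate identity of one factor to define right and left slice maps $\mathfrak{A}\widehat{\otimes}\mathcal{B}\to\mathfrak{A}$ and the induced maps on the unitized tensor squares, apply them to $(\mathcal{M}_{\gamma}),(\mathcal{N}_{\gamma})$, and read off semi-diagonals for $\mathfrak{A}$ through Theorem~\ref{aa}, symmetrically for $\mathcal{B}$. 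Alternatively this half can be carried out at the level of derivations, extending a given $\mathcal{D}\in\mathcal{Z}^{1}(\mathfrak{A},\mathcal{X})$ to $\mathfrak{A}\widehat{\otimes}\mathcal{B}$ and pulling the resulting approximate semi-innerness back along the approximate identity, in the spirit of the converse half of Theorem~\ref{2.9}.

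The main obstacle is the bookkeeping of the adjoined units. The natural linear surjection $\mathfrak{A}^{\sharp}\widehat{\otimes}\mathcal{B}^{\sharp}\to(\mathfrak{A}\widehat{\otimes}\mathcal{B})^{\sharp}$ is \emph{not} an algebra homomorphism: the cross terms $a\otimes e$ and $e\otimes b$ have no consistent image in $(\mathfrak{A}\widehat{\otimes}\mathcal{B})^{\sharp}$, so a single diagonal cannot simply be transported. This is precisely where the \emph{semi} hypothesis earns its keep: the freedom to use two distinct nets $\mathcal{M}_{\gamma}\neq\mathcal{N}_{\gamma}$, with the units placed asymmetrically, lets the unwanted cross terms occupy disjoint tensor slots and cancel only in the limit, the very mechanism that fails for ordinary approximate contractibility. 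I expect the delicate step to be verifying that these asymmetric cross terms tend to zero in the projective norm rather than merely weakly, and it is there that the approximate identities of the factors must be invoked explicitly.
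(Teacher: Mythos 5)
Your strategy for the direction ``$\mathfrak{A}$, $\mathcal{B}$ approximately semi-contractible $\Rightarrow$ $\mathfrak{A}\widehat{\otimes}\mathcal{B}$ approximately semi-contractible'' has a genuine gap exactly at the point you flag, and the repair you propose does not work as described. Everything hinges on pushing $\Theta(M_{\alpha}\otimes P_{\beta})$ down through the natural linear surjection $q:\mathfrak{A}^{\sharp}\widehat{\otimes}\mathcal{B}^{\sharp}\rightarrow(\mathfrak{A}\widehat{\otimes}\mathcal{B})^{\sharp}$; but $q$ is neither an algebra homomorphism nor an $(\mathfrak{A}\widehat{\otimes}\mathcal{B})$-bimodule map, whatever values one assigns to the cross terms $a\otimes e$ and $e\otimes b$, so none of conditions (i)--(iii) of Theorem \ref{aa} survives the descent. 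In particular your assertion that (ii) and (iii) ``follow immediately'' is false: $\pi_{(\mathfrak{A}\widehat{\otimes}\mathcal{B})^{\sharp}}\circ(q\otimes q)\neq q\circ\pi_{\mathfrak{A}^{\sharp}\widehat{\otimes}\mathcal{B}^{\sharp}}$ precisely because $q$ is not multiplicative, and the element $e_{\mathfrak{A}^{\sharp}}\otimes e_{\mathcal{B}^{\sharp}}$ you appeal to is the unit of $\mathfrak{A}^{\sharp}\widehat{\otimes}\mathcal{B}^{\sharp}$, not of $(\mathfrak{A}\widehat{\otimes}\mathcal{B})^{\sharp}$. Concretely, writing $M_{\alpha}=\sum_{n}(x_{n}+\lambda_{n}e)\otimes(x_{n}'+\lambda_{n}'e)$ and $P_{\beta}=\sum_{m}(y_{m}+\mu_{m}e)\otimes(y_{m}'+\mu_{m}'e)$, the product of the pushed-down nets contains stray terms such as $\bigl(\sum_{n}\lambda_{n}'x_{n}\bigr)\otimes\bigl(\sum_{m}\mu_{m}'y_{m}\bigr)$, on which Theorem \ref{aa} gives no control whatsoever; neither the freedom of using two distinct nets nor approximate identities produces the missing norm estimates. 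The paper sidesteps this issue rather than solving it: it takes the bidual semi-diagonals of Theorem \ref{mm}, maps $M_{\alpha}\otimes M_{\alpha}'$ and $N_{\alpha}\otimes N_{\alpha}'$ into $((\mathfrak{A}^{\sharp}\widehat{\otimes}\mathcal{B}^{\sharp})\widehat{\otimes}(\mathfrak{A}^{\sharp}\widehat{\otimes}\mathcal{B}^{\sharp}))^{**}$ by the map $\Psi$ of Lemma \ref{sm}, whose module-compatibility properties are exactly what make (i)--(iii) transfer, and verifies all conditions over the unital algebra $\mathfrak{A}^{\sharp}\widehat{\otimes}\mathcal{B}^{\sharp}$; nothing is ever mapped onto $(\mathfrak{A}\widehat{\otimes}\mathcal{B})^{\sharp}$, so the unit mismatch you worry about never has to be confronted.

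The converse direction in your sketch suffers from the same structural problem plus a missing construction. A slice map $\mathrm{id}_{\mathfrak{A}}\otimes b^{*}$ is not multiplicative, so applying it (or its tensor square) to semi-diagonals of $\mathfrak{A}\widehat{\otimes}\mathcal{B}$ does not yield semi-diagonals of $\mathfrak{A}$: the condition $\pi(\mathcal{M}_{\gamma})\rightarrow e$ does not push forward. Your derivation-level alternative is undefined as stated, because a Banach $\mathfrak{A}$-bimodule $\mathcal{X}$ carries no natural $\mathfrak{A}\widehat{\otimes}\mathcal{B}$-module structure, so there is nothing to extend $\mathcal{D}$ to. The paper's actual argument supplies the missing ingredient: it makes $\mathcal{X}\widehat{\otimes}\mathcal{B}$ into an $\mathfrak{A}\widehat{\otimes}\mathcal{B}$-bimodule, extends $\mathcal{D}$ to $\mathcal{D}^{\sim}=\mathcal{D}\otimes\mathrm{id}_{\mathcal{B}}$, and then slices the resulting identity $\mathcal{D}(a)\otimes b=\lim_{\alpha}[(a\otimes b).m_{\alpha}-n_{\alpha}.(a\otimes b)]$ with $\psi(x\otimes b)=\langle b_{0}^{*},b\rangle x$ at $b=b_{0}$, where $b_{0}^{*}\in\mathcal{B}^{*}$ is a fixed functional with $\langle b_{0}^{*},b_{0}\rangle=1$. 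Slicing is legitimate there because it is applied to a linear identity rather than to a multiplicative condition, and no approximate identity is needed anywhere in that half, contrary to your sketch.
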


\begin{proof}
Let $\mathcal{X}$ be a Banach $\mathfrak{A}$-bimodule, and $\mathcal{D}: \mathfrak{A}\rightarrow
 \mathcal{X}$ be a continuous derivation. So $\mathcal{X}\widehat{\otimes} \mathcal{B}$ is a
  $\mathfrak{A}\widehat{\otimes} \mathcal{B}$-bimodule by following module actions
\begin{align*}
&(x'\otimes b')(a\otimes b):=x'a\otimes b'b\\
&(a\otimes b)(x'\otimes b'):=ax'\otimes bb'\
\end{align*}
for $a\in \mathfrak{A}$, $x'\in \mathcal{X}$ and $b, b' \in \mathcal{B}$.\\
Define
\begin{align*}
&\mathcal{D}^{\sim}: \mathfrak{A}\widehat{\otimes}\mathcal{B}\rightarrow
 \mathcal{X}\widehat{\otimes}\mathcal{B}\\
&\mathcal{D}^{\sim}(a\otimes b)=\mathcal{D}(a)\otimes b 
\end{align*}
for $a\in \mathfrak{A}$ and $b\in \mathcal{B}$. Then 
\begin{align*}
\mathcal{D}^{\sim}((a_{1}\otimes b_{1})(a_{2}\otimes b_{2}))&=\mathcal{D}^{\sim}(a_{1}a_{2}\otimes
 b_{1}b_{2})\\
&=(\mathcal{D}(a_{1}).a_{2}+a_{1}.\mathcal{D}(a_{2}))\otimes (b_{1}b_{2})\\
&=((\mathcal{D}(a_{1}).a_{2})\otimes b_{1}b_{2})+(a_{1}.\mathcal{D}(a_{2})\otimes b_{1}b_{2})\\
&=((\mathcal{D}(a_{1})\otimes b_{1}).(a_{2}\otimes b_{2}))+((a_{1}\otimes b_{1}).(\mathcal{D
}(a_{2})\otimes b_{2})\\
&=\mathcal{D}^{\sim}(a_{1}\otimes b_{1}).(a_{2}\otimes b_{2})+(a_{1}\otimes b_{1}).\mathcal{D}^{\sim
}(a_{2}\otimes b_{2}).
\end{align*}
Consequently, $\mathcal{D}^{\sim}$ is a derivation. By assumption, $\mathcal{D}^{\sim}$ must be
 approximately semi- inner, so there exist nets $m_{\alpha}=\sum_{n=1}^{\infty} x_{n,\alpha}\otimes b_{n
 ,\alpha}$ and $ n_{\alpha}=\sum_{n=1}^{\infty} x'_{n,\alpha}\otimes b'_{n,\alpha}$ in
  $\mathcal{X}\widehat{\otimes}\mathcal{B}$, $ (x_{n,\alpha}, x'_{n,\alpha}\in \mathcal {X}, b_{n,\alpha}, b'_{n
  ,\alpha}\in \mathcal {B})$, such that, for each $a\otimes b\in\mathfrak{A} \widehat{\otimes} \mathcal{B}$, we
   have
\begin{align*}
\mathcal{D}(a)\otimes b&=\mathcal{D}^{\sim}(a\otimes b) \\
&=\lim_{\alpha}[(a\otimes b).m_{\alpha}-n_{\alpha}.(a\otimes b)]\\
&=\lim_{\alpha}[\sum_{n=1}^{\infty}(a\otimes b).(x_{n,\alpha}\otimes b_{n,\alpha})-\sum_{n=1}^{\infty
}(x'_{n,\alpha}\otimes b'_{n,\alpha}).(a\otimes b)]\\
 &=\lim_{\alpha}\left[ \sum_{n=1}^{\infty}ax_{n,\alpha}\otimes bb_{n,\alpha}-\sum_{n=1}^{\infty}x'_{n
 ,\alpha}a\otimes b'_{n,\alpha}b\right] .
\end{align*}

Now fix $b_{0}\in \mathcal{B}$ non-zero, and take $b^{*}_{0}\in \mathcal{B}^{*}$ with $\langle
 b_{0}^{*},b_{0}\rangle =1.$ Define the operator
\begin{align*}
&\psi:\mathcal{X}\widehat{\otimes}\mathcal{B} \rightarrow \mathcal{X}\\
&\psi(x \otimes b)=\langle b_{0}^{*},b \rangle x
\end{align*}
and by applying it to both sides of equation \eqref{2.5} we have
\begin{align*}
\langle b_{0}^{*},b\rangle\mathcal{D}(a)&=\psi (\mathcal{D}(a)\otimes b)\\
&=\lim_{\alpha}[\sum_{n=1}^{\infty} \psi(ax_{n,\alpha}\otimes bb_{n,\alpha})-\sum_{n=1}^{\infty} \psi(x'_{n
,\alpha}a\otimes b'_{n,\alpha}b)].\\
&=\lim_{\alpha}[\sum_{n=1}^{\infty} a \langle b_{0}^{*} ,bb_{n,\alpha}\rangle x_{n,
\alpha}-\sum_{n=1}^{\infty}\langle b_{0}^{*},b'_{n,\alpha}b\rangle x'_{n,\alpha}a].
\end{align*}
Take $b=b_{0}$, then for each $a\in \mathfrak{A}$,
\begin{align*}
 \mathcal{D}(a)&=\lim_{\alpha} [a(\sum_{n=1}^{\infty} \langle b_{0}^{*} ,b_{0}b_{n,\alpha}\rangle
  x_{n,\alpha})-(\sum_{n=1}^{\infty}\langle b_{0}^{*},b'_{n,\alpha}b_{0}\rangle x'_{n,\alpha})a].
  \end{align*}
By setting $\mu_{\alpha}=\sum_{n=1}^{\infty} \langle b_{0}^{*} ,b_{0}b_{n,\alpha}\rangle x_{n,\alpha}$, and
 $\xi_{\alpha}=\sum_{n=1}^{\infty}\langle b_{0}^{*},b'_{n,\alpha}b_{0}\rangle x'_{n,\alpha}$, so
  $\mu_{\alpha}, \xi_{\alpha}\in \mathcal{X}$ and $\mathcal{D}(a)=\lim_{\alpha} (a
  .\mu_{\alpha}-\xi_{\alpha}.a)$.
Consequently, $\mathfrak{A}$ is approximately semi-contractible. There is a similar argument for $\mathcal{B}$.\\
Conversely, let $\mathfrak{A}$ and $\mathcal{B}$ be approximately semi-amenable. So by Theorem \ref{mm}
 there are nets $(M_{\alpha})$ and $(N_{\alpha})$ in
 $(\mathfrak{A}^{\sharp}\widehat{\otimes}\mathfrak{A}^{\sharp})^{**}$              and nets $(M'_{\alpha})$,
  $(N'_{\alpha})$ in $(\mathcal{B}^{\sharp}\widehat{\otimes}\mathcal{B}^{\sharp})^{**}$ which satisfy
   conditions $(i)$, $(ii)$ and $(iii)$ mentioned in Theorem \ref{mm}. Let $\Psi$ and $\Psi_{1}$ be continuous
    linear maps mentioned in Lemma \ref{sm}, as follows,
\begin{align*}
&\Psi:(\mathfrak{A}^{\sharp}\widehat{\otimes}\mathfrak{A}^{\sharp})^{**}\widehat{\otimes
}(\mathcal{B}^{\sharp}\widehat{\otimes}\mathcal{B}^{\sharp})^{**}\rightarrow
 ((\mathfrak{A}^{\sharp}\widehat{\otimes}\mathfrak{A}^{\sharp})\widehat{\otimes
}(\mathcal{B}^{\sharp}\widehat{\otimes}\mathcal{B}^{\sharp}))^{**}\cong
 ((\mathfrak{A}^{\sharp}\widehat{\otimes}\mathcal{B}^{\sharp})\widehat{\otimes
 }(\mathfrak{A}^{\sharp}\widehat{\otimes}\mathcal{B}^{\sharp}))^{**},\\
&\Psi_{1}:(\mathfrak{A}^{\sharp})^{**}\widehat{\otimes
}(\mathfrak{B}^{\sharp})^{**}\rightarrow(\mathfrak{A}^{\sharp}\widehat{\otimes}
\mathfrak{B}^{\sharp})^{**}.
\end{align*}  
Then $(M''_{\alpha})=\left\lbrace \Psi(M_{\alpha}\otimes M'_{\alpha})\right\rbrace_{\alpha} $ and
 $(N''_{\alpha})=\left\lbrace \Psi(N_{\alpha}\otimes
 N'_{\alpha})\right\rbrace_{\alpha} $ are nets in
  $((\mathfrak{A}^{\sharp}\widehat{\otimes}\mathcal{B}^{\sharp})\widehat{\otimes
 }(\mathfrak{A}^{\sharp}\widehat{\otimes}\mathcal{B}^{\sharp}))^{**}$. For each $(a\otimes b)\in
  \mathfrak{A}^{\sharp}\widehat{\otimes}\mathcal{B}^{\sharp}$ we have, the following items
\begin{align*}
(i)\;\;\;(a\otimes b).M''_{\alpha} - N''_{\alpha}.(a\otimes b)&=(a\otimes b).\Psi(M_{\alpha}\otimes
 M'_{\alpha})-\Psi(N_{\alpha}\otimes N'_{\alpha}).(a\otimes b)\\
&=\Psi(a.M_{\alpha}\otimes b.M'_{\alpha})-\Psi(N_{\alpha}.a\otimes N'_{\alpha}.b)\\
&=\overline{T}(aM_{\alpha},bM'_{\alpha})-\overline{T}(N_{\alpha}a,N'_{\alpha}b)\\
&=\overline{T}(aM_{\alpha}-N_{\alpha}a,bM'_{\alpha}-N'_{\alpha}b)\rightarrow\overline{T}(0,0)=0
\end{align*}
 $(ii)$ It's easily verified that, $\pi^{**}_{\mathfrak{A}^{\sharp}
 \widehat{\otimes}\mathcal{B}^{\sharp}}\circ\Psi=\Psi_{1}\circ(\pi^{**}_{\mathfrak{A}^{\sharp}}\otimes\pi^{**}_{\mathfrak{B}^{\sharp}})
 $. Then we have
\begin{align*}
\pi^{**}_{\mathfrak{A}^{\sharp}\widehat{\otimes}\mathcal{B}^{\sharp}
}(M''_{\alpha})&=\pi^{**}_{\mathfrak{A}^{\sharp}\widehat{\otimes}\mathcal{B}^{\sharp}
}(\Psi(M_{\alpha}\otimes M'_{\alpha}))\\
&=(\Psi_{1}\circ(\pi^{**}_{\mathfrak{A}^{\sharp}}\otimes\pi^{**}_{\mathfrak{B}^{\sharp}})
(M_{\alpha}\otimes M'_{\alpha})\\
&=\Psi_{1}((\pi^{**}_{\mathfrak{A}^{\sharp}}(M_{\alpha})\otimes (\pi^{**}_{\mathfrak{B}^{\sharp}
}(M'_{\alpha}))\\
&=\Psi_{1}(e_{_{(\mathfrak{A}^{\sharp})^{**}}}\otimes e_{_{(\mathfrak{B}^{\sharp})^{**}}})\\
&=e_{_{(\mathfrak{A}^{\sharp}\widehat{\otimes}\mathcal{B}^{\sharp})^{**}}}
\end{align*}

$(iii)$ Similarly, $\pi^{**}_{\mathfrak{A}^{\sharp}\widehat{\otimes}\mathcal{B}^{\sharp}
}(N''_{\alpha})=e_{_{(\mathfrak{A}^{\sharp}\widehat{\otimes}\mathcal{B}^{\sharp})^{**}}}$.
\end{proof}

Since for a non-zero character $\varphi$ on $l^{p}$; the map $x \otimes y \rightarrow \varphi(x)y$ determines a
 continuous epimorphism of $l^{p}\widehat{\otimes} l^{p}$ onto $l^{p}$. So if $l^{p}\widehat{\otimes} l^{p}$
 is amenable, $l^{p}$ would also be amenable, which is contrary to non-amenability of $l^{p}$. On the one hand
  by Theorem 2.1, of \cite{4} approximate amenability is equivalent by approximate contractibility. Thus
   $l^{p}\widehat{\otimes}l^{p}$ is not approximate contractible and equivalently approximate amenable.
\begin{example}
 By Theorem $\ref{ff}$, $l^{p}\widehat{\otimes} l^{q}$ for $1\leq p,q <\infty$ is approximately semi-amenable. Which is never
  approximately amenable.
\end{example}

\begin{definition}
A dual Banach algebra $\mathfrak{A}$ is called approximately Connes semi-amenable if, for every normal, dual
 Banach $\mathfrak{A}$-bimodule $\mathcal{X}$, every $w^{*}$-continuous derivation $\mathcal{D}:
 \mathfrak{A}\rightarrow\mathcal{X}$ is approximately semi-inner. 
\end{definition}

\begin{theorem}
Let $\mathfrak{A}$ and $\mathcal{B}$ be dual Banach algebras.
Then $\mathfrak{A}\oplus \mathcal{B}$ is approximately Connes semi-amenable   if and only if 
$\mathfrak{A}$ and $\mathcal{B}$ are approximately Connes semi-amenable.
\end{theorem}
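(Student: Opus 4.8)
The plan is to transcribe the argument of Theorem~\ref{2.9} into the Connes setting, systematically replacing every Banach bimodule by a \emph{normal dual} bimodule and every continuous derivation by a $w^{*}$-continuous one. First I would note that $\mathfrak{A}\oplus\mathcal{B}$ is again a dual Banach algebra, with predual $\mathfrak{A}_{*}\oplus\mathcal{B}_{*}$, so that the statement is meaningful. For the ``only if'' direction, suppose $\mathcal{X}=(\mathcal{X}_{*})^{*}$ is a normal dual $\mathfrak{A}$-bimodule, $\mathcal{Y}=(\mathcal{Y}_{*})^{*}$ a normal dual $\mathcal{B}$-bimodule, and take $\mathcal{D}_{1}\in\mathcal{Z}^{1}_{w^{*}}(\mathfrak{A},\mathcal{X})$, $\mathcal{D}_{2}\in\mathcal{Z}^{1}_{w^{*}}(\mathcal{B},\mathcal{Y})$. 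I would equip $\mathcal{X}\oplus\mathcal{Y}=(\mathcal{X}_{*}\oplus\mathcal{Y}_{*})^{*}$ with the coordinatewise actions $(a,b).(x,y)=(a.x,b.y)$ and $(x,y).(a,b)=(x.a,y.b)$. Because $w^{*}$-convergence in $\mathfrak{A}\oplus\mathcal{B}$ is coordinatewise and each factor acts $w^{*}$-continuously on its normal module, $\mathcal{X}\oplus\mathcal{Y}$ is a normal dual $\mathfrak{A}\oplus\mathcal{B}$-bimodule, and $\mathcal{D}^{\sim}(a,b):=(\mathcal{D}_{1}(a),\mathcal{D}_{2}(b))$ is a $w^{*}$-continuous derivation. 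By hypothesis $\mathcal{D}^{\sim}$ is approximately semi-inner, and reading off the two coordinates of the defining limit shows $\mathcal{D}_{1}$ and $\mathcal{D}_{2}$ are approximately semi-inner, exactly as in Theorem~\ref{2.9}.

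For the ``if'' direction, let $\mathcal{X}=(\mathcal{X}_{*})^{*}$ be a normal dual $\mathfrak{A}\oplus\mathcal{B}$-bimodule and $\mathcal{D}\in\mathcal{Z}^{1}_{w^{*}}(\mathfrak{A}\oplus\mathcal{B},\mathcal{X})$. Via the coordinate embeddings $a\mapsto(a,0)$ and $b\mapsto(0,b)$, which are $w^{*}$-$w^{*}$-continuous, $\mathcal{X}$ becomes a normal dual $\mathfrak{A}$-bimodule and a normal dual $\mathcal{B}$-bimodule; here I must check that composing these embeddings with the ($w^{*}$-continuous) module maps of $\mathcal{X}$ again yields $w^{*}$-continuous maps, which is immediate. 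Setting $\mathcal{D}_{1}(a):=\mathcal{D}(a,0)$ and $\mathcal{D}_{2}(b):=\mathcal{D}(0,b)$ gives two $w^{*}$-continuous derivations, so by hypothesis each is approximately semi-inner, say $\mathcal{D}_{1}(a)=\lim_{\alpha}((a,0).\xi_{\alpha}-\zeta_{\alpha}.(a,0))$ and $\mathcal{D}_{2}(b)=\lim_{\alpha}((0,b).\mu_{\alpha}-\eta_{\alpha}.(0,b))$. Linearity of $\mathcal{D}$ then gives $\mathcal{D}(a,b)=\mathcal{D}_{1}(a)+\mathcal{D}_{2}(b)$.

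It remains to fuse these two expressions into a single approximately semi-inner expression for $\mathcal{D}$, and this is the step I expect to be the main obstacle. Exactly as in Theorem~\ref{2.9}, I would introduce one-sided approximate identities $(e_{\alpha}),(f_{\alpha})$ for $\mathfrak{A}$ and $(e'_{\alpha}),(f'_{\alpha})$ for $\mathcal{B}$ and set $\psi_{\alpha}:=(e_{\alpha},0).\xi_{\alpha}+(0,e'_{\alpha}).\mu_{\alpha}$ and $\varphi_{\alpha}:=\zeta_{\alpha}.(f_{\alpha},0)+\eta_{\alpha}.(0,f'_{\alpha})$; the idempotent-like factors $(e_{\alpha},0)$ and $(0,e'_{\alpha})$ serve to annihilate the unwanted cross terms arising from $(a,b).\xi_{\alpha}=(a,0).\xi_{\alpha}+(0,b).\xi_{\alpha}$, so that $\mathcal{D}(a,b)=\lim_{\alpha}((a,b).\psi_{\alpha}-\varphi_{\alpha}.(a,b))$. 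Two points demand genuine care beyond the bookkeeping of Theorem~\ref{2.9}. First, the very existence of these approximate identities must be secured in the Connes framework: I would need the analogue, for approximately Connes semi-amenable dual Banach algebras, of the lemma preceding Theorem~\ref{mm} guaranteeing an approximate identity (possibly only a $w^{*}$-approximate identity). Second, all the limit manipulations must be compatible with the $w^{*}$-topology, so that the normality of $\mathcal{X}$ is invoked to commute $\lim_{\alpha}$ past the module actions and to ensure the cross terms vanish in the correct topology. Granting these, $\mathcal{D}$ is approximately semi-inner and hence $\mathfrak{A}\oplus\mathcal{B}$ is approximately Connes semi-amenable.
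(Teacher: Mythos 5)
Your ``only if'' direction is exactly the paper's: the same coordinatewise module $\mathcal{X}\oplus\mathcal{Y}$, the same derivation $\mathcal{D}^{\thicksim}(a,b)=(\mathcal{D}_{1}(a),\mathcal{D}_{2}(b))$, and reading off the two coordinates of the limit. The converse is where you genuinely diverge from the paper, and also where your proposal has a real gap. The paper's converse uses \emph{no} approximate identities at all: it sets $\mathcal{D}_{1}(a)=\mathcal{D}(a,0)$, invokes approximate Connes semi-amenability of $\mathfrak{A}$ to realize $\mathcal{D}_{1}$ as a limit of semi-inner derivations $\delta_{y_{\alpha}}$, then subtracts, declaring $\mathcal{D}^{\sim}=\mathcal{D}-\lim_{\alpha}\delta_{y_{\alpha}}$ to be a $w^{*}$-continuous derivation vanishing on $\mathfrak{A}$, treats it as a derivation of $\mathcal{B}$ into $\mathcal{X}$, writes it as $\lim_{\alpha}\delta_{z_{\alpha}}$, and concludes $\mathcal{D}=\lim_{\alpha}\delta_{y_{\alpha}+z_{\alpha}}$. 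Your route instead transplants the approximate-identity fusion of Theorem \ref{2.9}. Each approach buys something: the paper's avoids the approximate-identity question entirely, while yours avoids the delicate point in the paper's subtraction step (the net $\delta_{y_{\alpha}}$ is only known to converge pointwise on $\mathfrak{A}\times\{0\}$, so $\mathcal{D}-\lim_{\alpha}\delta_{y_{\alpha}}$ is not obviously a well-defined map on all of $\mathfrak{A}\oplus\mathcal{B}$; your cross-term bookkeeping with $(e_{\alpha},0)$ and $(0,e'_{\alpha})$ is precisely the device that handles what the subtraction glosses over).

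The gap is the one you flag and then ``grant'': existence of one-sided approximate identities for an approximately Connes semi-amenable dual Banach algebra. This does not follow from the unlabelled lemma preceding Theorem \ref{mm}, because in the Connes setting the hypothesis is weaker --- only $w^{*}$-continuous derivations into \emph{normal, dual} bimodules are assumed approximately semi-inner --- and the bidual-type module used in that lemma (following Lemma 2.2 of Ghahramani--Loy) is neither normal nor receives a $w^{*}$-continuous derivation. The gap is fillable, and more easily than in the non-Connes case: take $X=\mathfrak{A}$ itself (a dual space with predual $\mathfrak{A}_{*}$), with left action $a.x=ax$ and right action $x.a=0$. Since $\mathfrak{A}_{*}$ is a submodule of $\mathfrak{A}^{*}$, multiplication is separately $w^{*}$-continuous, so $X$ is a normal dual $\mathfrak{A}$-bimodule, and the identity map $\mathfrak{A}\rightarrow X$ is a $w^{*}$-continuous derivation; approximate Connes semi-amenability then gives nets with $a=\lim_{\alpha}(a.\xi_{\alpha}-\mu_{\alpha}.a)=\lim_{\alpha}a\xi_{\alpha}$ in norm, i.e.\ a right approximate identity, and symmetrically a left one. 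Note that this produces a \emph{norm} approximate identity, which is what your fusion actually requires: your parenthetical fallback ``possibly only a $w^{*}$-approximate identity'' would not suffice, since with only $w^{*}$-convergence of $(ae_{\alpha},0)$ to $(a,0)$ the cross terms vanish only in the $w^{*}$-topology, and you would then conclude a $w^{*}$-approximate version of semi-innerness rather than the norm-limit notion the paper defines. Finally, you inherit from Theorem \ref{2.9} the unaddressed iterated-limit issue (the nets $(\xi_{\alpha})$, $(\mu_{\alpha})$ need not be bounded, so inserting $e_{\alpha}$ requires a re-indexing argument), but since the paper accepts that step in Theorem \ref{2.9}, this is an inherited rather than a new defect.
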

\begin{proof}
 Let $\mathcal{X}$ and $\mathcal{Y}$ be normal, dual and Banach $\mathfrak{A}$ and 
 $\mathcal{B}$-bimodule, respectively. Let $\mathcal{D}_{1}\in \mathcal{Z}^{1}_{w^{*}}(\mathfrak{A}
 ,\mathcal{X})$ and $\mathcal{D}_{2}\in\mathcal{Z}^{1}_{w^{*}}(\mathcal{B},\mathcal{Y})$. So
  $\mathcal{X}\oplus \mathcal{Y}$ is normal, dual and Banach $\mathfrak{A}\oplus \mathcal{B}$-bimodule by
   module operations defined with $$(a,b).(x,y):=(ax,by),\; (x,y).(a,b):=(xa,yb).$$ The mapping
\begin{align*}
&\mathcal{D}^{\sim}:\mathfrak{A}\oplus \mathcal{B}\rightarrow \mathcal{X}\oplus \mathcal{Y}\\
&\mathcal{D}^{\sim}(a,b)=(\mathcal{D}_{1}(a),\mathcal{D}_{2}(b))\
\end{align*}
  is a derivation, since 
 \begin{align*} 
\mathcal{D}^{\sim}[(a,b)(c,d)]&=\mathcal{D}^{\sim}(ac,bd)\\
&=(\mathcal{D}_{1}(ac),\mathcal{D}_{2}(bd))\\
 &=(\mathcal{D}_{1}(a).c+a.\mathcal{D}_{1}(c),\mathcal{D}_{2}(b).d+b.\mathcal{D}_{2}(d))\\ 
  [ \mathcal{D}^{\sim}(a,b)].(c,d)&=(\mathcal{D}_{1}(a),\mathcal{D}_{2}(b)).(c,d)\\
   &=(\mathcal{D}_{1}(a).c,\mathcal{D}_{2}(b).d)\\ 
  (a,b).[\mathcal{D}^{\sim}(c,d)]&=(a,b).(\mathcal{D}_{1}(c),\mathcal{D}_{2}(d))\\
  &=(a.\mathcal{D}_{1}(c),b.\mathcal{D}_{2}(d)).
   \end{align*}
 Thus, $$\mathcal{D}^{\sim}[(a,b)(c,d)]=[\mathcal{D}^{\sim}(a,b)].(c,d)+(a,b).[\mathcal{D}^{\sim}(c,d)]$$
 and by $w^{*}$-continuity of $\mathcal{D}_{1}$ and $\mathcal{D}_{2}$ we conclude $\mathcal{D}^{\sim} 
 \in\mathcal{Z}^{1}_{w^{*}}(\mathfrak{A}\oplus \mathcal{B},\mathcal{X}\oplus\mathcal{Y})$.
   So due to the approximate Connes semi-amenability of $\mathfrak{A}\oplus \mathcal{B}$,
    $\mathcal{D}^{\sim}$ is approximately semi-inner. Thus
    there are nets  $(\psi_{\alpha})=(x_{\alpha},y_{\alpha})_{\alpha}$ and
     $(\mu_{\alpha})=(x'_{\alpha},y'_{\alpha})_{\alpha}$, in $\mathcal{X}\oplus\mathcal{Y}$ such that for each
$(a,b)\in \mathfrak{A}\oplus \mathcal{B}$ we have 
 \begin{align*}
(\mathcal{D}_{1}(a),\mathcal{D}_{2}(b))&=\mathcal{D}^{\sim}(a,b)\\
&=\lim_{\alpha}\left\lbrace (a,b).\psi_{\alpha}-\mu_{\alpha}.(a,b)\right\rbrace \\
 &=\lim_{\alpha} \left\lbrace (a,b).(x_{\alpha},y_{\alpha})-(x'_{\alpha},y'_{\alpha}).(a,b)\right\rbrace \\
&=\lim_{\alpha}  (ax_{\alpha}-x'_{\alpha}a,by_{\alpha}-y'_{\alpha}b).
 \end{align*}
 Consequently, $\mathcal{D}_{1}$ and $\mathcal{D}_{2}$ are approximately semi-inner. Thus $\mathfrak{A}$
  and $\mathcal{B}$ are approximately Connes semi-amenable .\\
 Conversely, let $\mathcal{X}$ be a normal, dual Banach $\mathfrak{A}\oplus\mathcal{B}$-bimodule and
  $\mathcal{D}\in \mathcal{Z}^{1}_{w^{*}}(\mathfrak{A}\oplus\mathcal{B},\mathcal{X})$. Then $\mathcal{X}$
   is a normal, dual Banach $\mathfrak{A}$ (and $\mathcal{B}$)-bimodule by following multiplications,
  \begin{align*}
  &a.x:=(a,0)x,\;x.a:=x(a,0),\\
  (\text{and} \;&b.x:=(0,b)x,\;x.b:=x(0,b)).
  \end{align*}
 We define $\mathcal{D}_{1}:\mathfrak{A}\rightarrow \mathcal{X}$ with $\mathcal{D}_{1}(a)=\mathcal{D
 }(a,0)$. Obviously, $\mathcal{D}_{1}$ is a $w^{*}$-continuous derivation. Thus according to approximately
  Connes semi-amenability of $\mathfrak{A}$, $\mathcal{D}_{1}=\lim_{\alpha}\delta_{y_{\alpha}}$, for some net
 $(y_{\alpha})$ in $\mathcal{X}$. Therefore
    $\mathcal{D}^{\sim}=(\mathcal{D}-\lim_{\alpha}\delta_{y_{\alpha}})\in
    \mathcal{Z}^{1
  }_{w^{*}}(\mathfrak{A}\oplus\mathcal{B},\mathcal{X})$ and
   $\mathcal{D}^{\sim}\vert_{_{\mathfrak{A}}}=\lbrace0 \rbrace$. Hence $\mathcal{D}^{\sim}
  :\mathcal{B}\rightarrow \mathcal{X}$ is a $w^{*}$-continuous derivation.
   So $\mathcal{D}^{\sim}=\lim_{\alpha}\delta_{z_{\alpha}}$, for some net $(z_{\alpha})$ in $\mathcal{X}$.
    Consequently,
     $\mathcal{D}=\lim_{\alpha}\delta_{y_{\alpha}+z_{\alpha}}$
     and it follows that $\mathfrak{A}\oplus\mathcal{B}$ is approximate Connes semi-amenable.
 \end{proof}

\bibliographystyle{amsplain}

\end{document}